\documentclass[12pt]{amsart}
\usepackage{amssymb,amsmath,amsfonts, amsthm}
\usepackage{graphicx}
\usepackage{tikz}

\newcommand{\R}{\mathbb{R}}
\newcommand{\N}{\mathbb{N}}

\newcommand{\Q}{\mathbb{Q}}
\newcommand{\C}{\mathbb{C}}
\newcommand{\bbH}{\mathbb{H}}

\newcommand{\Diag}{\mathrm{Diag}}
\newcommand{\tr}{\mathrm{tr}}

\newcommand{\Tr}{\mathrm{Tr}}

\newcommand{\SL}{\mathrm{SL}}
\newcommand{\PSL}{\mathrm{PSL}}

\newcommand{\Ha}{\mathrm{\textbf{H}}}
\newcommand{\RP}{\mathbb{RP}}
\newcommand{\Lim}{\mathcal{L}}

\newtheorem{theorem}{Theorem}[section]
\newtheorem{proposition}[theorem]{Proposition}
\newtheorem{corollary}[theorem]{Corollary}
\newtheorem{lemma}[theorem]{Lemma}

\newtheorem*{thmA}{Theorem A}
\newtheorem*{thmB}{Theorem B}

\theoremstyle{definition}

\theoremstyle{remark}
\newtheorem*{rem}{Remark}

\newcommand{\geoBig}{
(0,-1) arc(0:60:1.73)
}
\newcommand{\geoMedium}{
(0,-1) arc(0:90:1)
}
\newcommand{\geoSmall}{
(0,-1) arc(0:120:0.58)
}
\newcommand{\geoTiny}{
(0,-1) arc(0:150:0.27)
}
\newcommand{\geoVeryTiny}{
(0,-1) arc(0:165:0.13165)
}
\newcommand{\geoLine}{
(0,-1) --(0,1)
}

\title{The limit sets of subgroups of lattices in $\PSL(2,\R)^r$}
\author{Slavyana Geninska}

\address{Institut de Math\'ematiques de Toulouse \\ 
Universit\'e Toulouse 3 \\
118 route de Narbonne \\
31062 Toulouse, France
}
\email{geninska@math.univ-toulouse.fr}

\begin{document}
\begin{abstract}
In this paper we give a description of the possible limit sets of finitely generated subgroups of irreducible lattices in $\PSL(2,\R)^r$.
\end{abstract}

\maketitle

\pagestyle{plain}
\setcounter{page}{1}

\section{Introduction}

While lattices in higher rank Lie groups are fairly well understood, only little is known about discrete subgroups of infinite covolume of semi-simple Lie groups. The main class of examples are Schottky groups. 

This paper continues the study of the limit set of finitely generated subgroups of irreducible lattices in $\PSL(2,\R)^r$ that was started in the author's papers \cite{sG10a} and \cite{sG12}. In the first paper we give necessary and sufficient conditions for the limit set to be ``small'' and in the second paper we give examples of infinite covolume subgroups that have ``big'' limit sets.

In this paper we give a description of the possible limit sets of finitely generated subgroups of irreducible lattices in $\PSL(2,\R)^r$. We see that morally we do not have limit sets of surprising types and that the examples of \cite{sG10a} and \cite{sG12} are in some sense extremal.

This article is organized as follows. The second section fixes the notation and gives the necessary background. We provide a detailed description of the geometric boundary of $(\bbH^2)^r$, which is the set of equivalence classes of asymptotic geodesic rays. We introduce the notion of the limit set as the part of the orbit closure $\overline{\Gamma(x)}$ in the geometric boundary where $x$ is an arbitrary point in $(\bbH^2)^r$. We also state a natural structure theorem for the regular limit set $\Lim_\Gamma^{reg}$ of discrete nonelementary groups $\Gamma$ due to Link~\cite{gL02}: $\Lim_\Gamma^{reg}$ is the product of the Furstenberg limit set $F_\Gamma$ and the projective limit set $P_\Gamma$.

In Section 3 we study the limit set of finitely generated Zariski dense subgroups. The main result is the following theorem (Theorem~\ref{T:FurstenbergLimitSets} in the text). 

\begin{thmA}
Let $\Gamma$ be a finitely generated Zariski dense subgroup of an irreducible arithmetic subgroup of $\PSL(2,\R)^r$ with $r \geq 2$.

If there is $j \in \{1,\ldots,r\}$ such that $p_j(\Gamma)$ is discrete, then $F_\Gamma$ is homeomorphic to $\Lim_{p_j(\Gamma)}\times (\partial \bbH^2)^m$, where $m$ is the number of nondiscrete projections of $\Gamma$.

Otherwise, if $p_i(\Gamma)$ is not discrete for all $i=1,\ldots,r$, then $F_\Gamma$ is  the whole Furstenberg boundary.
\end{thmA}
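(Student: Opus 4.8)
The plan is to describe $F_\Gamma$ concretely as a set of fixed-point tuples and then determine its closure. By the structure theorem of Link recalled in Section~2, $\Lim_\Gamma^{reg}=F_\Gamma\times P_\Gamma$, and $F_\Gamma$ is the unique minimal closed $\Gamma$-invariant subset of the Furstenberg boundary $(\partial\bbH^2)^r$. Since $\Gamma$ is Zariski dense it contains elements $\gamma$ that are hyperbolic (loxodromic) in every factor; for such a $\gamma$ write $a_i(\gamma)\in\partial\bbH^2$ for the attracting fixed point of $p_i(\gamma)$. I would first record that $F_\Gamma$ equals the closure of the set of tuples $\{(a_1(\gamma),\dots,a_r(\gamma)) : \gamma\in\Gamma \text{ hyperbolic in all factors}\}$, so that the whole problem reduces to identifying the closure of this set of attracting fixed points.

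Next I would exploit the arithmetic coupling between the factors. Relabel the indices so that $A$ collects the $j$ with $p_j(\Gamma)$ discrete and $B$ the remaining $m$ indices. For two indices $i,j\in A$ the matrices $p_i(\gamma)$ and $p_j(\gamma)$ are Galois conjugate images of a single element of the underlying quaternion algebra, and since both projections are discrete the assignment $p_i(\gamma)\mapsto p_j(\gamma)$ induces a $\Gamma$-equivariant homeomorphism $\Lim_{p_i(\Gamma)}\to\Lim_{p_j(\Gamma)}$ (the rigidity established in \cite{sG10a}). Consequently the restriction of the fixed-point tuples to the coordinates in $A$ lies on, and fills out, the graph of these homeomorphisms; this graph is homeomorphic to a single copy of $\Lim_{p_{j_0}(\Gamma)}$, which is where the lone factor $\Lim_{p_{j_0}(\Gamma)}$ in the statement comes from.

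The heart of the argument, and the step I expect to be the main obstacle, is to \emph{decouple} the nondiscrete coordinates and show that, over any point of the $A$-graph, the $B$-coordinates of the fixed-point tuples are dense in $(\partial\bbH^2)^m$. Individual density of each $p_j(\Gamma)$, $j\in B$, in $\PSL(2,\R)$ is not enough; I need \emph{joint} density. To get it I would analyze the closure $H=\overline\Gamma$ in the Lie group $\PSL(2,\R)^r$. Zariski density forces the Lie algebra of $H^0$ to be invariant under $\mathrm{Ad}(\Gamma)$, and hence, by taking the Zariski closure of its stabiliser, invariant under all of $\mathrm{Ad}(\PSL(2,\R)^r)$; as $\mathfrak{sl}_2(\R)$ is simple this Lie algebra is a sum $\bigoplus_{j\in S}\mathfrak{sl}_2(\R)$ of whole factors, so that $H^0=\prod_{j\in S}\PSL(2,\R)$ and no proper diagonal coupling among the factors can survive. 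One then checks that $S$ contains every nondiscrete index, while the discrete residue $H/H^0$ carries precisely the rigid data of the $A$-graph. The delicate point is that ruling out a proper coupling \emph{among the nondiscrete factors themselves} — equivalently, upgrading the individual densities to joint density — is exactly what Zariski density together with the Galois relations of the arithmetic lattice is needed for, and this is where the real work lies. Combining the full set of directions supplied by $H^0$ with the coupled discrete data of the $A$-graph then yields density of the fixed-point tuples in $\Lim_{p_{j_0}(\Gamma)}\times(\partial\bbH^2)^m$, and passing to closures gives the asserted homeomorphism.

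Finally, the second assertion is the special case $A=\varnothing$. Then there is no rigid discrete data, the ideal argument places every index in $S$ (after absorbing the discrete residue $H/H^0$, which is itself Zariski dense with all projections nondiscrete and can be handled by the same reasoning), so the attracting fixed points are dense in every coordinate simultaneously and $F_\Gamma=(\partial\bbH^2)^r$ is the whole Furstenberg boundary.
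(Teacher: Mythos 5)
Your central decoupling step is vacuous in every instance of the theorem, not just delicate. The group $\Gamma$ is by hypothesis a subgroup of an irreducible \emph{arithmetic} subgroup $\Delta$ of $\PSL(2,\R)^r$, i.e.\ of a lattice; hence $\Gamma$ is discrete in $\PSL(2,\R)^r$, so $H=\overline{\Gamma}=\Gamma$ and $H^0$ is trivial, giving $S=\varnothing$ always. The nondiscreteness in the statement refers to the projections $p_j(\Gamma)\subset\PSL(2,\R)$, and you have conflated the closure of $\Gamma$ in the ambient product with the closures of its projections: the former never supplies any ``directions.'' Replacing $H$ by the closure of the projection $p_B(\Gamma)$ to the block $B$ of nondiscrete coordinates does not repair this, because that projection can itself be discrete even when every single coordinate projection is nondiscrete --- take $\Gamma=\Delta$ itself, where each $p_j(\Gamma)$ is dense in $\PSL(2,\R)$ but $p_B(\Gamma)=\Gamma$ is a lattice. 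At that point your parenthetical fallback, ``the discrete residue $H/H^0$ \ldots\ can be handled by the same reasoning,'' is circular: it hands you back precisely the problem you started with. So the joint density of the fixed-point tuples over the nondiscrete block --- which you correctly flag as the heart of the matter --- is never established; the Lie-theoretic argument (adjoint invariance of the Lie algebra of $H^0$, simplicity of $\mathfrak{sl}_2(\R)$) is sound as far as it goes, but it goes nowhere here.

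For contrast, the paper obtains joint density dynamically and element-wise, never through a group closure. Since a nondiscrete nonelementary Fuchsian projection contains elliptic elements of infinite order, Lemma~\ref{L:OneElliptic2} manufactures inside $\Gamma$ mixed isometries whose first $n-1$ components are hyperbolic and whose $n$-th is an irrational rotation. Its proof writes hyperbolic and elliptic isometries as products of two reflections (Beardon \cite{aB95}, \S 7.33--7.34) and invokes the minimality of the action of a \emph{pair} of irrational rotations on the $2$-torus (Lemma~3.3 of \cite{sG12}); the hypotheses of that torus lemma are secured by a Galois trace-counting argument when $\tr(g_k)\neq\tr(g_{k+1})$, and by Benoist's theorem \cite{yB97} that $P_\Gamma$ has nonempty interior --- this is exactly where Zariski density enters --- to choose $h$ with $\tr(h_k)\neq\tr(h_{k+1})$ in the equal-trace case. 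The Furstenberg limit set is then filled in one circle factor at a time: along a subsequence of powers, North--South dynamics of the hyperbolic components pins the first $n-1$ coordinates while the dense rotation orbit sweeps out the new factor. Your treatment of the discrete block is essentially the paper's Step~1 (the paper makes it precise via Tukia's theorem \cite{pT85} on type-preserving isomorphisms of geometrically finite groups, which is where finite generation is used), and your reduction to closures of attracting fixed-point tuples is fine; but the decoupling step must be replaced wholesale by a construction of the above kind.
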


This theorem together with the fact that the projective limit set of a Zariski dense subgroup of $\PSL(2,\R)^r$ is of nonempty interior (this is a special case of the theorem by Benoist in Section 1.2 in \cite{yB97}) describes the regular limit set of finitely generated Zariski dense subgroups of $\PSL(2,\R)^r$.

In Section 4 we treat the general case of finitely generated subgroups. The main result is the following theorem which is a compilation of Proposition~\ref{P:FurstenbergLimitSets} and Proposition~\ref{P:ProjectiveLimitSets} in the text.

\begin{thmB}
Let $\Gamma$ be a finitely generated and nonelementary subgroup of an irreducible arithmetic subgroup $\Delta$ of $\PSL(2,\R)^r$ with $r \geq 2$. Further let $m$ be the number of nondiscrete projections of $\Gamma$ and $k$ the degree of extension $k = [\Q(\Tr(p_j(\Delta^{(2)})) : \Q(\Tr(p_j(\Gamma^{(2)})))]$
for one and hence for all $j \in \{1,\ldots,r\}$.
\begin{itemize}
\item[(i)] If there is $j \in \{1,\ldots,r\}$ such that $p_j(\Gamma)$ is discrete, then $F_\Gamma$ is homeomorphic to $\Lim_{p_j(\Gamma)}\times (\partial \bbH^2)^\frac{m}{k}$, otherwise,  $F_\Gamma$ is homeomorphic to $(\partial \bbH^2)^\frac{m}{k}$.
\item[(ii)] The projective limit set $P_{\Gamma}$ is of dimension $\frac{r}{k}-1$.
\end{itemize}
\end{thmB}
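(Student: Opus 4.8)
The plan is to reduce both assertions to the Zariski dense case treated in Theorem A, but carried out in a product of only $r/k$ factors, by descending through the invariant trace field. Abbreviate $K = \Q(\Tr(p_j(\Delta^{(2)})))$ and $L = \Q(\Tr(p_j(\Gamma^{(2)})))$, and let $A$ be the quaternion algebra over $K$ from which $\Delta$ is derived. Then $K$ is totally real of degree $r$ with $A$ split at all $r$ real places, while $L \subseteq K$ is totally real of degree $r/k$ with $[K:L]=k$; in particular $k \mid r$. Let $B$ be the invariant quaternion algebra of $\Gamma$ over $L$, so that $\Gamma^{(2)} \subset B^1$. First I would verify $B \otimes_L K \cong A$: since $B = L[\Gamma^{(2)}] \subseteq A$, the compositum $K[\Gamma^{(2)}] = B \otimes_L K$ is a quaternion $K$-subalgebra of $A$ of the same $K$-dimension $4$, and hence equals $A$.

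The decisive structural point is then a diagonal embedding. Each real place $v$ of $L$ extends to exactly $k$ real places $w \mid v$ of $K$, partitioning the $r$ factors of $\PSL(2,\R)^r$ into $r/k$ blocks of size $k$. For $\gamma \in \Gamma^{(2)} \subset B^1$ the identification $A \otimes_{K,w} \R = B \otimes_{L,v} \R$ shows that $p_w(\gamma)$ depends only on $v = w|_L$; hence $p_w = p_{w'}$ on $\Gamma^{(2)}$ whenever $w,w'$ lie over the same place of $L$. Thus the image of $\Gamma^{(2)}$ lies in the diagonally embedded copy of $\PSL(2,\R)^{r/k}$ inside $\PSL(2,\R)^r$ (the tuples whose coordinates agree within each block), where $\Gamma^{(2)}$ is, up to commensurability, a finitely generated subgroup of the irreducible arithmetic lattice $\Delta'$ derived from $B$ over $L$ (a lattice in $\PSL(2,\R)^{r/k}$ because $B$ splits at all $r/k$ real places of $L$), and is Zariski dense there, its invariant trace field being exactly $L$. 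Since discreteness of a projection is a finite-index invariant, each block is uniformly discrete or nondiscrete; counting the $r$ original projections gives $m = k \cdot (\#\text{ nondiscrete blocks})$, so $k \mid m$ and there are exactly $m/k$ nondiscrete blocks.

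With this reduction in place, both conclusions follow one dimension count lower. As limit sets are unchanged on passing to the finite-index subgroup $\Gamma^{(2)}$, we have $F_\Gamma = F_{\Gamma^{(2)}}$ and $P_\Gamma = P_{\Gamma^{(2)}}$, and both live in the diagonal copies of $(\partial\bbH^2)^{r/k}$ and of $\RP^{r/k-1}$, respectively. For (i), when $r/k \geq 2$ I apply Theorem A to the finitely generated Zariski dense group $\Gamma^{(2)} \subset \Delta'$ in $\PSL(2,\R)^{r/k}$: if some block is discrete it gives $F_\Gamma \cong \Lim_{p_j(\Gamma)} \times (\partial\bbH^2)^{m/k}$, and otherwise $F_\Gamma \cong (\partial\bbH^2)^{m/k}$; the degenerate case $r/k = 1$ (so $L = \Q$) is the classical description for the Fuchsian or dense image of $\Gamma^{(2)}$ in a single $\PSL(2,\R)$, handled directly. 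For (ii), Benoist's interior property applied to the Zariski dense group $\Gamma^{(2)}$ in $\PSL(2,\R)^{r/k}$ shows that $P_{\Gamma^{(2)}}$ has nonempty interior in $\RP^{r/k-1}$; transported along the diagonal embedding, $P_\Gamma$ is full-dimensional in the $(r/k-1)$-dimensional diagonal projective subspace, whence $\dim P_\Gamma = \frac{r}{k}-1$.

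The step I expect to be the main obstacle is the arithmetic descent underlying the first two paragraphs: identifying the invariant quaternion algebra $B$ and proving $B \otimes_L K \cong A$, and then checking that $\Gamma^{(2)}$ really lands in a finite-index subgroup of an irreducible arithmetic lattice of $\PSL(2,\R)^{r/k}$ while remaining Zariski dense there. This demands care with orders and integrality so that arithmeticity is preserved under the descent from $K$ to $L$, and with the exact correspondence between the real places of $K$ and those of $L$. Once the diagonal picture is secured, the two statements reduce, essentially formally, to Theorem A and to Benoist's theorem in the product of $r/k$ factors.
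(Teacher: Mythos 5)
Your proposal is correct and takes essentially the same route as the paper: your invariant-trace-field/quaternion-algebra descent and place-partition argument is precisely the content of the paper's Corollary~\ref{C:GenExtZariski} (the Zariski closure of $\Gamma$ is a product of $r/k$ diagonal blocks, each of equal size $k$), after which, exactly as in the paper, part (i) follows by applying Theorem~\ref{T:FurstenbergLimitSets} in $\PSL(2,\R)^{r/k}$ and part (ii) by Benoist's nonempty-interior theorem; in fact you supply arithmetic details (the identification $B\otimes_L K\cong A$ and the order/integrality descent to the lattice $\Delta'$) that the paper leaves implicit. Two side remarks in your write-up are inessential misstatements rather than gaps: $K=\Q(\Tr(p_j(\Delta^{(2)})))$ need not have degree $r$ over $\Q$ since $A$ may be ramified at further real places of $K$ (your identity $A\otimes_{K,w}\R\cong B\otimes_{L,v}\R$ is what actually shows the $r$ split places partition into blocks of size $k$), and $r/k=1$ does not force $L=\Q$ --- it forces all projections of $\Gamma^{(2)}$ into an arithmetic Fuchsian group, hence the discrete alternative, so the ``dense image'' case there never occurs.
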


A corollary of Theorem B is the fact that $\dim{F_{\Gamma}} \leq 1+\dim{P_\Gamma}$, i.e. it is not possible to have simultaneously a big $F_\Gamma$ and a small $P_{\Gamma}$.

At the end of the section we give some examples of subgroups of irreducible arithmetic groups in $\PSL(2,\R)^r$ and their limit sets.

The author thanks Fanny Kassel and Francois Gueritaud for a motivating discussion and the anonymous referee for valuable comments and suggestions.

\section{Background}
In this section we provide some basic facts and notations that are needed in the rest of this paper. 

We will change freely between matrices in $\SL(2,\R)$ and their action as fractional linear transformations, namely as elements in $\PSL(2,\R)$.

For $g = \begin{bmatrix} a&b\\c&d \end{bmatrix} \in \PSL(2,\R)$ we set $\tr(g)=|a+d|$. 

For a subgroup $\Gamma$ of $\PSL(2,\R)$ we call 
\[ \Tr(\Gamma) = \{\tr(g) \mid g \in \Gamma \} \]
the \textit{trace set of} $\Gamma$. 

The \textit{translation length} $\ell(g)$ of a hyperbolic $g$ is the distance between a point $x$ on the geodesic fixed by $g$ and its image $g(x)$ under $g$. If $g$ is elliptic, parabolic or the identity, we define $\ell(g):=0$.

The group $\Gamma$ is \textit{elementary} if it has a finite orbit in its action on $\bbH^2\cup\R\cup\{\infty\}$. Otherwise it is said to be \textit{nonelementary}. Every nonelementary subgroup of $\PSL(2,\R)$ contains infinitely many hyperbolic elements, no two of which have a common fixed point (see Theorem 5.1.3 in the book of Beardon~\cite{aB95}).

A Schottky group is a finitely generated free subgroup of $\PSL(2,\R)$ that contains only hyperbolic isometries except for the identity. We will mainly deal with two-generated Schottky groups.

For each two hyperbolic isometries without common fixed points, we can find powers of them that generate a Schottky group. This means that every nonelementary subgroup of $\PSL(2,\R)$ has a subgroup that is a Schottky group. A proof of this lemma can be found in \cite{sG09}.

A Schottky group contains isometries without common fixed points because it is nonelementary.

\subsection{The geometric boundary of $(\bbH^2)^r$}
For $i=1,\ldots,r$, we denote by $p_i: (\bbH^2)^r \rightarrow \bbH^2$, $(z_1,...,z_{r}) \mapsto z_i$ the $i$-th projection of $(\bbH^2)^r$ into $\bbH^2$. Let $\gamma: [0, \infty )\rightarrow (\bbH^2)^r$ be a curve in $(\bbH^2)^r$. Then $\gamma$ is a geodesic ray in $(\bbH^2)^r$ if and only if $p_i\circ \gamma$ is a geodesic ray or a point in $\bbH^2$ for each $i = 1, \ldots , r$. A geodesic $\gamma$ is \textit{regular} if $p_i\circ \gamma$ is a nonconstant geodesic in $\bbH^2$ for each $i = 1, \ldots , r$.

Two unit speed geodesic rays $\gamma$ and $\delta$ in $(\bbH^2)^r$ are said to be asymptotic if there exists a positive number $c$ such that $d(\gamma(t), \delta(t)) \leq c$ for all $t \geq 0$. This is an equivalence relation on the unit speed geodesic rays of $(\bbH^2)^r$. For any unit speed geodesic $\gamma$ of $(\bbH^2)^r$ we denote by $\gamma(+\infty)$ the equivalence class of its positive ray. 

We denote by $\partial ((\bbH^2)^r)$ the set of all equivalence classes of unit speed geodesic rays of $(\bbH^2)^r$. We call $\partial ((\bbH^2)^r)$ the \textit{geometric boundary} of $(\bbH^2)^r$. 

The geometric boundary $\partial ((\bbH^2)^r)$ with the cone topology is homeomorphic to the unit tangent sphere of a point in $(\bbH^2)^r$ (see Eberlein \cite{pE96}, 1.7). (For example $\partial \bbH^2$ is homeomorphic to $S^1$.) The homeomorphism is given by the fact that for each point $x_0$ and each unit speed geodesic ray $\gamma$ in $(\bbH^2)^r$ there exists a unique unit speed geodesic ray $\delta$ with $\delta(0) = x_0$ which is asymptotic to $\gamma$.

The group $\PSL(2,\R)^r$ acts on $(\bbH^2)^r$ by isometries in the following way. For $g = (g_1,\ldots, g_{r}) \in \PSL(2,\R)^r$
$$ g:(\bbH^2)^r \rightarrow (\bbH^2)^r, \quad (z_1, \ldots, z_{r}) \mapsto (g_1 z_1, \ldots, g_{r} z_{r}), $$
where $z_i \mapsto g_i z_i$ is the usual action given by linear fractional transformation, $i = 1, \ldots, {r}$.

The action of $\PSL(2,\R)^r$ can be extended naturally to $\partial ((\bbH^2)^r)$. Let $g$ be in $\PSL(2,\R)^r$ and $\xi$ be a point in the boundary $\partial ((\bbH^2)^r)$. If $\gamma$ is a representative of $\xi$, then $g(\xi)$ is the equivalence class of the geodesic ray $g\circ \gamma$.

We call $g$ \textit{elliptic} if all $g_i$ are elliptic isometries, \textit{parabolic} if all $g_i$ are parabolic isometries and \textit{hyperbolic} if all $g_i$ are hyperbolic isometries. In all the other cases we call $g$ \textit{mixed}. 

If at least one $\ell(g_i)$ is different from zero, then we define the \textit{translation direction} of $g$ as $L(g):= (\ell(g_1): \ldots: \ell(g_{r})) \in \RP^{r-1}$.

\subsection{Decomposition of the geometric boundary of $(\bbH^2)^r$}
In this section we show a natural decomposition of the geometric boundary of $(\bbH^2)^r$ and in particular of its regular part. This is a special case of a general construction for a large class of symmetric spaces (see e.g. Leuzinger~\cite{eL92} and Link~\cite{gL02}). This decomposition plays a main role in this article.

Let $x=(x_1,\ldots,x_{r})$ be a point in $(\bbH^2)^r$. We consider the \textit{Weyl chambers} with vertex $x$ in $(\bbH^2)^r$ given by the product of the images of the geodesics $\delta_i:[0,\infty)\rightarrow \bbH^2$ with $\delta_i(0)=x_i$ for $i = 1,\ldots,r$. The isotropy group in $\PSL(2,\R)^r$ of $x$ is $\mathrm{PSO}(2)^r$. It acts simply transitively on the Weyl chamber with vertex $x$. 

Let $W$ be a Weyl chamber with vertex $x$. In $W$, two unit speed geodesics $\gamma(t) = (\gamma_1(t),\ldots,\gamma_{r}(t))$ and $\tilde{\gamma} = (\tilde{\gamma}_1(t),\ldots,\tilde{\gamma}_{r}(t))$ are different if and only if the corresponding projective points $$\left(d_H(\gamma_1(0),\gamma_1(1)):\ldots:d_H(\gamma_{r}(0),\gamma_{r}(1))\right) \text{ and}$$ $$(d_H(\tilde{\gamma}_1(0),\tilde{\gamma}_1(1)):\ldots:d_H(\tilde{\gamma}_{r}(0),\tilde{\gamma}_{r}(1)))$$ 
are different. Here $d_H$ denotes the hyperbolic distance in $\bbH^2$. The point in $\RP^{r-1}$ given by $\left(d_H(\gamma_1(0),\gamma_1(1)):\ldots:d_H(\gamma_{r}(0),\gamma_r(1))\right)$ is a direction in the Weyl chamber and it is the same as $(\left\|v_1\right\|:\ldots:\left\|v_{r}\right\|)$, where $v = (v_1,\ldots,v_{r}):= \gamma'(0)$ is the unit tangent vector of $\gamma$ in $0$. 

In other words we can extend the action of $Iso_x$ to the tangent space at $x$ in $(\bbH^2)^r$. Then $Iso_x$ maps a unit tangent vector at $x$ onto a unit tangent vector at $x$.
Let $v$ be a unit tangent vector at $x$ in $(\bbH^2)^r$. We denote by $v_i$ the $i$-th projection of $v$ on the tangent spaces at $x_i$, $i=1,\ldots,r$. Then all the vectors $w$ in the orbit of $v$ under $Iso_x$ have $\left\|w_i\right\|=\left\|v_i\right\|$. 

Let $v$ be a vector in the unit tangent sphere at $x$ in $(\bbH^2)^r$. If $v$ is tangent to a regular geodesic, then the orbit of $v$ is homeomorphic to $(S^1)^r \cong \left( \partial \bbH^2 \right)^r $ because $ \partial \bbH^2 \cong S^1$. The orbit of $v$ under the group $\mathrm{PSO}(2)^r$ consists of all unit tangent vectors $w$ at $x$ such that $\left\|w_i\right\|=\left\|v_i\right\|$ for $i=1,\ldots,r$.

The \textit{regular boundary} $\partial ((\bbH^2)^r)_{reg}$ of $(\bbH^2)^r$ consists of the equivalence classes of regular geodesics. Hence it is identified with $\left( \partial \bbH^2 \right)^r \times \RP^{r-1}_+$ where 
$$ \RP^{r-1}_+ := \left\{(w_1:\ldots:w_{r}) \in \RP^{r-1} \mid w_1 > 0, \ldots, w_{r} > 0 \right\}. $$
Here $w_1,..,w_{r}$ can be thought as the norms of the projections of the regular unit tangent vectors on the simple factors of $(\bbH^2)^r$.

$\left( \partial \bbH^2 \right)^r$ is called the \textit{Furstenberg boundary} of $(\bbH^2)^r$. 

We note that the decomposition of the boundary into orbits under the group $Iso_x$ is independent of the point $x$.

\subsection{The limit set of a group}
Let $x$ be a point and $\{x_n\}_{n\in \N}$ a sequence of points in $(\bbH^2)^r$. We say that $\{x_n\}_{n\in \N}$ converges to a point $\xi \in \partial\left((\bbH^2)^r\right)$ if $\{x_n\}_{n\in \N}$ is discrete in $(\bbH^2)^r$ and the sequence of geodesic rays starting at $x$ and going through $x_n$ converges towards $\xi$ in the cone topology. With this topology, $(\bbH^2)^r \cup \partial\left((\bbH^2)^r\right)$ is a compactification of $(\bbH^2)^r$.

Let $\Gamma$ be a subgroup of $\PSL(2,\R)^r$. We denote by $\Gamma(x)$ the orbit of $x$ under $\Gamma$ and by $\overline{\Gamma(x)}$ - its closure. The \textit{limit set} of $\Gamma$ is $\mathcal{L}_\Gamma:= \overline{\Gamma(x)}\cap \partial\left((\bbH^2)^r\right)$. The limit set is independent of the choice of the point $x$ in $(\bbH^2)^r$. The \textit{regular limit set} is $\mathcal{L}_\Gamma^{reg}:=\mathcal{L}_\Gamma \cap  \partial\left((\bbH^2)^r\right)_{reg}$ and the \textit{singular limit set} is $\mathcal{L}_\Gamma^{sing}:=\mathcal{L}_\Gamma \backslash \mathcal{L}_\Gamma^{reg}$. 

We denote by $F_\Gamma$ the projection of $\mathcal{L}_\Gamma^{reg}$ on the Furstenberg boundary $\left( \partial \bbH^2 \right)^r$ and by $P_\Gamma$ the projection of $\mathcal{L}_\Gamma^{reg}$ on $\RP^{r-1}_+$. The projection $F_\Gamma$ is the \textit{Furstenberg limit set} of $\Gamma$ and $P_\Gamma$ is the \textit{projective limit set} of $\Gamma$.
\medskip

Let $h\in \Gamma$ be a hyperbolic element or a mixed one with only hyperbolic or elliptic components. We consider the unit speed geodesics $\gamma$ in $(\bbH^2)^r$ such that $h\circ\gamma(t) = \gamma(t + T_h)$ for a fixed $T_h \in \R_{>0}$ and all $t \in \R$. Their union forms a flat of dimension the number of hyperbolic components of $h$. This flat is the product of the axes of the hyperbolic components and the fixed points of the elliptic components. Hence the considered geodesics are parallel in the Euclidean sense and therefore equivalent. 

We take a geodesic $\gamma$ as above. For $y \in \gamma$, the sequence $h^n(y)$ converges to $\gamma(+\infty)$. Hence also for every $x\in (\bbH^2)^r$, the sequence $h^n(x)$ converges to $\gamma(+\infty)$. Thus $\gamma(+\infty)$ is in $\mathcal{L}_\Gamma$. The sequence $h^{-n}(x)$ converges to $\gamma(-\infty):=-\gamma(+\infty)$ and therefore $\gamma(-\infty)$ is also in $\mathcal{L}_\Gamma$. The points $\gamma(+\infty)$ and $\gamma(-\infty)$ are the only fixed points of $h$ in $\Lim_\Gamma$. The point $\gamma(+\infty)$ is the \textit{attractive} fixed point of $h$ and the point $\gamma(-\infty)$ - the \textit{repulsive} fixed point of $h$. 

If $h$ is hyperbolic, then for all $i=1,\ldots,r$, the projection $p_i \circ \gamma$ is not a point. Hence $\gamma$ is regular and $\gamma(+\infty) \in \mathcal{L}_\Gamma^{reg}$. The point $\gamma(+\infty)$ can be written as $(\xi_F,\xi_P)$ in our description of the regular geometric boundary where 
$$\xi_F := (p_1\circ \gamma(+\infty),\ldots,p_{r}\circ \gamma(+\infty))$$ 
is in the Furstenberg boundary and 
$$\xi_P := (d_H(p_1\circ \gamma(0),p_1\circ \gamma(1)) : \ldots :d_H(p_{r}\circ \gamma(0), p_{r}\circ \gamma(1)))$$ 
is in the projective limit set. Here we note that $\xi_P$ is also equal to 
$$(d_H(p_1\circ \gamma(0),p_1\circ \gamma(T_h)) : \ldots :d_H(p_{r}\circ \gamma(0), p_{r}\circ \gamma(T_h))),$$ 
which is exactly the translation direction of $h$.

Thus the translation direction of each hyperbolic isometry $h$ in $\Gamma$ determines a point in the projective limit set $P_\Gamma$. This point does not change after conjugation with $h$ or after taking a power $h^m$ of $h$, because in these cases the translation direction remains unchanged. 

\medskip

Recall that following Maclachlan and Reid \cite{cM03}, we call a subgroup $\Gamma$ of $\PSL(2,\R)$ \textit{elementary} if there exists a finite $\Gamma$-orbit in $\overline{\bbH^2}:=\bbH^2 \cup \partial \bbH^2$ and \textit{nonelementary} if it is not elementary. Since $\bbH^2$ and $\partial \bbH^2$ are $\Gamma$-invariant, any $\Gamma$-orbit of a point in $\overline{\bbH^2}$ is either completely in $\bbH^2$ or completely in $\partial \bbH^2$. 

We call a subgroup $\Gamma$ of $\PSL(2,\R)^r$ \textit{nonelementary} if for all $i = 1, \ldots, r$,  $p_i(\Gamma)$ is nonelementary, and if for all $g \in \Gamma$ that are mixed, the projections $p_i\circ g$ are either hyperbolic or elliptic of infinite order. Since for all $i = 1, \ldots, r$,  $p_i(\Gamma)$ is nonelementary, $\Gamma$ does not contain only elliptic isometries and thus $\Lim_\Gamma$ is not empty.

This definition of nonelementary is more restrictive than the one given by Link in \cite{gL02}. By Lemma~1.2 in \cite{sG10a} if a subgroup $\Gamma$ of $\PSL(2,\R)^r$ is nonelementary (according to our definition), then it is nonelementary in the sense of Link's definition in \cite{gL02}.

The next theorem is a special case of Theorem 3 from the introduction of \cite{gL02}. It describes the structure of the regular limit set of nonelementary discrete subgroups of $\PSL(2,\R)^r$.

\begin{theorem}[\cite{gL02}]
\label{T:LinkFP}
Let $\Gamma$ be a nonelementary discrete subgroup of the group $\PSL(2,\R)^r$ acting on $(\bbH^2)^r$. If $\mathcal{L}_\Gamma^{reg}$ is not empty, then $F_\Gamma$ is a minimal closed $\Gamma$-invariant subset of $(\partial\bbH^2)^r$, the regular limit set equals the product $F_\Gamma \times P_\Gamma$ and $P_\Gamma$ is equal to the closure in $\RP^{r-1}_+$ of the set of translation directions of the hyperbolic isometries in $\Gamma$. 
\end{theorem}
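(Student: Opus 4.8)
The plan is to analyze the $\Gamma$-action on the regular boundary through its product decomposition $\partial((\bbH^2)^r)_{reg}\cong(\partial\bbH^2)^r\times\RP^{r-1}_+$ and to reduce every assertion to the dynamics of hyperbolic elements. First I would record the elementary but decisive observation that the $\PSL(2,\R)^r$-action preserves the projective coordinate: since each factor $g_i$ is an isometry of $\bbH^2$, it preserves the speed $\left\|v_i\right\|$ of the $i$-th projection of a regular geodesic, so the image of $(\xi_F,\xi_P)$ is $(g\,\xi_F,\xi_P)$. Thus $\mathcal{L}_\Gamma^{reg}$ is $\Gamma$-invariant and $\Gamma$ moves only the Furstenberg coordinate while fixing the projective one.

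The technical heart, and the step I expect to be the main obstacle, is to manufacture hyperbolic elements with prescribed approximate behaviour out of the mere nonemptiness of $\mathcal{L}_\Gamma^{reg}$. Given $(\eta_F,\eta_P)\in\mathcal{L}_\Gamma^{reg}$, written as $\lim_n g_n(x)$, I would pass to a subsequence so that $g_n^{-1}(x)$ also converges and arrange---replacing $g_n$ by $g_n h^{k}$ or by a conjugate for a suitable fixed element $h$, to destroy any coincidences---that in every factor the forward and backward limits are distinct. A contraction estimate in each $\bbH^2$ factor, i.e.\ north--south dynamics, then forces $p_i(g_n)$ to be hyperbolic in all $r$ factors for large $n$, hence $g_n$ to be a genuine hyperbolic element of $\Gamma$, whose attractive Furstenberg fixed point tends to $\eta_F$ and whose translation direction $L(g_n)$ tends to $\eta_P$. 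This simultaneously yields that attractive fixed points of hyperbolic elements are dense in $F_\Gamma$ and that translation directions are dense in $P_\Gamma$. Controlling all $r$ coordinates at once, and guaranteeing that the limiting element is truly hyperbolic rather than mixed, is where the real work lies.

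With hyperbolic elements in hand, minimality of $F_\Gamma$ follows by a standard transitivity argument. A hyperbolic $h=(h_1,\dots,h_r)$ acts on $(\partial\bbH^2)^r$ with north--south dynamics, so $h^n$ pushes any point avoiding the repulsive hyperplanes $\{\zeta_i=h_i^-\}$ toward $(h_1^+,\dots,h_r^+)$. Given a nonempty closed $\Gamma$-invariant $A\subseteq F_\Gamma$ and any neighbourhood in $F_\Gamma$, I would pick (by the density just established) a hyperbolic element whose attractive point lies in that neighbourhood, and after a harmless conjugation to avoid the repulsive hyperplanes, iterate it to send a point of $A$ arbitrarily close to that attractive point; hence $A$ meets every neighbourhood and $A=F_\Gamma$.

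Minimality then gives the product structure almost formally. Fixing $\xi_P\in P_\Gamma$, consider the slice $S_{\xi_P}=\{\eta_F : (\eta_F,\xi_P)\in\mathcal{L}_\Gamma^{reg}\}$. Since $\xi_P$ lies in the open regular locus and $\mathcal{L}_\Gamma$ is closed, $S_{\xi_P}$ is closed in $(\partial\bbH^2)^r$; it is nonempty by the definition of $P_\Gamma$ and $\Gamma$-invariant because the action fixes the projective coordinate, so by minimality $S_{\xi_P}=F_\Gamma$. As $\xi_P$ was arbitrary this gives $F_\Gamma\times P_\Gamma\subseteq\mathcal{L}_\Gamma^{reg}$, and the reverse inclusion is immediate from the definition of the projections, whence $\mathcal{L}_\Gamma^{reg}=F_\Gamma\times P_\Gamma$. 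The same closedness argument, run with a fixed $\eta_F\in F_\Gamma$, shows $P_\Gamma$ is closed in $\RP^{r-1}_+$. Finally, each hyperbolic element contributes its translation direction to $P_\Gamma$, so the closure of these directions is contained in the closed set $P_\Gamma$; combined with their density from the second step, $P_\Gamma$ equals the closure of the set of translation directions of the hyperbolic isometries of $\Gamma$, completing the proof.
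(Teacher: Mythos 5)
The paper offers no proof of this statement: it is imported verbatim as a special case of Theorem~3 of Link's dissertation \cite{gL02}, so there is no internal argument to compare yours against line by line. That said, your outline reconstructs what is essentially Link's (and Benoist's) original strategy, and its architecture is sound: the observation that the action fixes the $\RP^{r-1}_+$ coordinate is correct (asymptotic regular rays have equal speed vectors, since $|a_i-b_i|\,t$ stays bounded for asymptotic rays of factor speeds $a_i$, $b_i$, so the projective direction is well defined on classes and isometries preserve it); the production of hyperbolic elements approximating an arbitrary regular limit point is indeed the technical heart; north--south dynamics gives minimality; and the slice argument $S_{\xi_P}=F_\Gamma$ is clean --- note it also proves, as a byproduct, that $F_\Gamma$ is \emph{closed}, which the statement needs and which you should make explicit, since a priori the projection of the non-closed set $\mathcal{L}_\Gamma^{reg}$ could fail to be closed.

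Two places where your sketch defers real work. First, in your Step~2 the inference from factorwise contraction to ``$L(g_n)$ tends to $\eta_P$'' needs a quantitative lemma you only assert: under uniform transversality of the forward/backward data one has $|d_H(x_i,(g_n)_i x_i)-\ell((g_n)_i)|\leq C$ uniformly in $n$ and $i$, and only then does convergence of the projectivized displacement vector (which is what $g_n(x)\to(\eta_F,\eta_P)$ gives) transfer to the translation directions; relatedly, a single fixed $h$ in the replacement $g_n\mapsto g_n h^k$ need not destroy coincidences in all $r$ factors at once --- after extracting limits of the forward and backward data one should invoke a finite family of elements in componentwise general position, available from nonelementarity of every $p_i(\Gamma)$ via Lemma~\ref{L:Schottky}, in the style of Abels--Margulis--Soifer. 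Second, the ``harmless conjugation'' in the minimality step is not harmless as stated: conjugating $h$ moves its attractive point out of the prescribed neighbourhood. The standard repair is to move the point $a\in A$ instead: pick a second hyperbolic $g\in\Gamma$ whose component fixed points are disjoint from those of $h$ (again Lemma~\ref{L:Schottky}); then for large $m$ the point $g^m a$ --- which lies in $A$ by invariance --- avoids every repulsive hyperplane of $h$, because coordinates with $a_i=g_i^-$ stay at $g_i^-\neq h_i^-$ while all others approach $g_i^+\neq h_i^-$. With these two repairs your argument is complete and faithful to the cited original.
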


\subsection{Irreducible arithmetic groups in $\PSL(2,\R)^r$}
In this section, following Schmutz and Wolfart \cite{pS00} and Borel \cite{aB81}, we will describe the irreducible arithmetic subgroups of $\PSL(2,\R)^r$.

Let $K$ be a totally real algebraic number field of degree $n = [K:\Q]$ and let $\phi_i$, $i \in \{1, \ldots, n\}$, be the $n$ distinct embeddings of $K$ into $\R$, where $\phi_1 = id$.

Let $A = \left(\frac{a,b}{K}\right)$ be a quaternion algebra over $K$ such that for $1 \leq i \leq r$, the quaternion algebra $\left(\frac{\phi_i(a),\phi_i(b)}{\R}\right)$ is \textit{unramified}, i.e. isomorphic to the matrix algebra $M(2, \R)$, and for $r < i \leq n$, it is \textit{ramified}, i.e. isomorphic to the Hamilton quaternion algebra $\Ha$. In other words, the embeddings 
\[\phi_i: K \longrightarrow \R, \quad i = 1,\ldots, r\] 
can be extended to embeddings of $A$ into $M(2,\R)$ and the embeddings 
\[\phi_i: K \longrightarrow \R, \quad i = r+1,\ldots, n\] 
can be extended to embeddings of $A$ into $\Ha$. Note that for $\phi_i$, $i = 1,\ldots, r$, the identifications of $\left(\frac{\phi_i(a),\phi_i(b)}{\R}\right)$ with the matrix algebra  $M(2,\R)$ are not canonical. 

Let $\mathcal{O}$ be an order in $A$ and $\mathcal{O}^1$ the group of units in $\mathcal{O}$. Define $\Gamma(A,\mathcal{O}):= \phi_1(\mathcal{O}^1) \subset \SL(2,\R)$. The canonical image of $\Gamma(A,\mathcal{O})$ in $\PSL(2,\R)$ is called a group \textit{derived from a quaternion algebra}. The group $\Gamma(A,\mathcal{O})$ acts by isometries on $(\bbH^2)^r$ as follows. An element $g = \phi_1(\varepsilon)$ of $\Gamma(A,\mathcal{O})$ acts via
\[g: (z_1, \ldots, z_{r}) \mapsto (\phi_1(\varepsilon)z_1, \ldots, \phi_{r}(\varepsilon)z_{r}), \]
where $z_i \mapsto \phi_i(\varepsilon)z_i$ is the usual action by linear fractional transformation, $i = 1, \ldots, {r}$.

For a subgroup $S$ of $\Gamma(A,\mathcal{O})$ we denote by $S^*$ the group
$$\{g^*:= (\phi_1(\varepsilon), \ldots, \phi_{r}(\varepsilon))\mid \phi_1(\varepsilon) = g \in S\}.$$
Instead of $(\phi_1(\varepsilon), \ldots, \phi_{r}(\varepsilon))$, we will usually write $(\phi_1(g), \ldots, \phi_{q+r}(g))$ or, since $\phi_1$ is the identity, even $(g,\phi_2(g), \ldots, \phi_{r}(g))$. The isometries $\phi_1(g), \ldots, \phi_{q+r}(g)$ are called \textit{$\phi$-conjugates}. 

Note that $g^*$ and $S^*$ depend on the chosen embeddings $\phi_i$ of $A$ into $M(2,\R)$. On the other hand, the type of $g^*$ is determined uniquely by the type of $g$. This is given by the following lemma.

\begin{lemma}[\cite{sG10a}]
Let $S$ be a subgroup of $\Gamma(A,\mathcal{O})$ and $S^*$ be defined as above. For an element $g\in S$ the following assertions are true.

1. If $g$ is the identity, then $g^*$ is the identity.

2. If $g$ is parabolic, then $g^*$ is parabolic.

3. If $g$ is elliptic of finite order, then $g^*$ is elliptic of the same order.

4. If $g$ is hyperbolic, then $g^*$ is either hyperbolic or mixed such that, for $i = 1,\ldots,r$, $\phi_i(g)$ is either hyperbolic or elliptic of infinite order.

5. If $g$ is elliptic of infinite order, then its $\phi$-conjugates are hyperbolic or elliptic of infinite order.
\end{lemma}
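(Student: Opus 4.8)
The plan is to reduce all five assertions to the single algebraic invariant $t := \mathrm{Trd}(\varepsilon)\in K$, the reduced trace of the unit $\varepsilon\in\mathcal{O}^1$ with $\phi_1(\varepsilon)=g$, and to exploit that for each $i\in\{1,\dots,r\}$ the map $\phi_i\colon A\hookrightarrow M(2,\R)$ is an injective homomorphism compatible with reduced norm and reduced trace. First I would record two compatibilities. Since $\Nrd(\varepsilon)=1$ and $\phi_i$ respects the reduced norm, $\det(\phi_i(\varepsilon))=\phi_i(\Nrd(\varepsilon))=1$, so $\phi_i(\varepsilon)\in\SL(2,\R)$; and since $\phi_i$ respects the reduced trace (and $\phi_i(K)\subset\R$), the ordinary matrix trace of $\phi_i(\varepsilon)$ equals $\phi_i(t)\in\R$. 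Because the geometric type of an element of $\SL(2,\R)$ is determined solely by comparing the absolute value of its trace with $2$ (hyperbolic when $>2$, parabolic or $\pm I$ when $=2$, elliptic when $<2$), the classification of $g^*$ reduces to a statement about the reals $\phi_1(t),\dots,\phi_r(t)$ together with the order of $\varepsilon$.

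The case analysis is then driven by two facts. \textbf{(A)} The condition $\phi_i(t)=\pm 2$ is all-or-nothing: if $\phi_i(t)=\pm2$ for some $i$, then $\phi_i(t^2-4)=0$, and since $\phi_i$ is injective on $K$ this forces $t^2-4=0$ in $K$, i.e. $t=\pm2\in\Q$, whence $\phi_j(t)=\pm2$ for \emph{every} $j$. \textbf{(B)} Finite order is seen identically by all $\phi_i$: injectivity gives $\phi_i(\varepsilon)^m=\pm1\iff\varepsilon^m=\pm1\iff g^m=\pm I$, so $\phi_i(\varepsilon)$ has finite order in $\PSL(2,\R)$ precisely when $g$ does, and then of the same order. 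In particular $\phi_i(\varepsilon)=\pm I\iff\varepsilon=\pm1\iff g=\mathrm{id}$, so a nontrivial $g$ has every $\phi_i(\varepsilon)\neq\pm I$.

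With (A) and (B) the five cases follow quickly. For (1), $g=\mathrm{id}$ gives $\varepsilon=\pm1$, so every $\phi_i(\varepsilon)=\pm1$ and $g^*$ is the identity. For (2), $g$ parabolic means $|\phi_1(t)|=2$ with $g\neq\mathrm{id}$; by (A) every $\phi_i(t)=\pm2$ and by (B) every $\phi_i(\varepsilon)\neq\pm I$, so each component is parabolic. For (3), $g$ elliptic of finite order $m$ forces by (B) that each $\phi_i(\varepsilon)$ has order $m$, and a finite-order element of $\SL(2,\R)$ distinct from $\pm I$ is elliptic, so each component is elliptic of order $m$. For (4), $g$ hyperbolic gives $|\phi_1(t)|>2$, hence $t\neq\pm2$; by (A) no $\phi_i(t)$ equals $\pm2$, so each $\phi_i(\varepsilon)$ is hyperbolic (when $|\phi_i(t)|>2$) or elliptic (when $|\phi_i(t)|<2$), and in the elliptic subcase it is of infinite order by (B) since $g$ is. Case (5) is identical to (4) with $|\phi_1(t)|<2$ replacing $|\phi_1(t)|>2$: (A) again excludes parabolic components and (B) excludes finite-order elliptic ones.

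The only genuinely non-formal inputs are (A) and (B); the rest is the standard trace trichotomy in $\SL(2,\R)$. I therefore expect the point needing the most care to be the justification of these two facts, i.e. verifying that each $\phi_i$ ($1\le i\le r$) is an injective, reduced-norm- and reduced-trace-preserving homomorphism, so that ``trace $=\pm2$'' and ``finite order'' are genuinely algebraic conditions on $\varepsilon$ detected identically by all embeddings. This is precisely what makes the parabolic condition rigid and ties the order of each $\phi_i(\varepsilon)$ to that of $g$.
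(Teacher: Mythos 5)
Your proof is correct, and it takes essentially the same route as the original proof in \cite{sG10a} (the paper itself only quotes the lemma without proof): the type of each component $\phi_i(\varepsilon)$ is read off from the Galois conjugates $\phi_i(t)$ of the reduced trace, with the rationality rigidity of $t=\pm2$ (your fact (A)) making the parabolic condition all-or-nothing, and the injectivity of the embeddings of the simple algebra $A$ (your fact (B)) transferring the identity, finite order, and its exact value between components. Your attention to the two non-formal inputs --- that each $\phi_i$ is injective and compatible with reduced trace and reduced norm --- is exactly the right place to put the care, and the case analysis is complete.
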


Hence the mixed isometries in this setting have components that are only hyperbolic or elliptic of infinite order. This justifies the condition in our definition of nonelementary that the projections of all mixed isometries can be only hyperbolic or elliptic of infinite order.

By Borel \cite{aB81}, Section 3.3, all \textit{irreducible arithmetic subgroups} of the group $\PSL(2,\R)^r$ are commensurable to a $\Gamma(A,\mathcal{O})^*$. They have finite covolume. By Margulis, for $r\geq2$, all irreducible discrete subgroups of $\PSL(2,\R)^r$ of finite covolume are arithmetic, which shows the importance of the above construction.





\subsection{Properties of nonelementary subgroups of $\PSL(2,\R)^r$}

In this section we cite several results that are used later in the article and whose proofs can be found in \cite{sG10a}.

\begin{lemma}[\cite{sG10a}]
\label{L:Schottky}
Let $\Gamma$ be a nonelementary subgroup of $\PSL(2,\R)^r$. Further let $g$ and $h$ be two hyperbolic isometries in $\Gamma$. Then there are hyperbolic isometries $g'$ and $h'$ in $\Gamma$ with $L(g) = L(g')$ and $L(h) = L(h')$ such that the groups generated by the corresponding components are all Schottky groups (with only hyperbolic isometries).
\end{lemma}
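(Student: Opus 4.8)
The plan is to reduce the statement, using the invariance of the translation direction under conjugation and under taking powers (both noted above), to the single-factor fact recalled earlier: two hyperbolic isometries of $\bbH^2$ without a common fixed point have powers generating a Schottky group. Concretely, I would leave $g$ essentially untouched and only conjugate $h$, arranging that in every factor the two elements cease to share a fixed point, and then raise everything to one large common power.

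\emph{Step 1 (reduction).} Write $g=(g_1,\ldots,g_r)$ and $h=(h_1,\ldots,h_r)$ with all components hyperbolic, and let $g_i^{\pm},h_i^{\pm}\in\partial\bbH^2$ be their fixed points. I claim it suffices to produce $\gamma\in\Gamma$ with $p_i(\gamma)\{h_i^{+},h_i^{-}\}\cap\{g_i^{+},g_i^{-}\}=\emptyset$ for every $i$. Indeed, put $\tilde h:=\gamma h\gamma^{-1}$; then for each $i$ the hyperbolic isometries $g_i$ and $\tilde h_i$ have no common fixed point, so the standard ping-pong argument gives a threshold $N_i$ with $\langle g_i^{N},\tilde h_i^{N}\rangle$ a Schottky group for all $N\ge N_i$. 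Taking $N:=\max_i N_i$ and setting $g':=g^{N}$, $h':=\tilde h^{N}=\gamma h^{N}\gamma^{-1}$, all four requirements hold: $g',h'\in\Gamma$ are hyperbolic; $L(g')=L(g)$ and $L(h')=L(h)$, since powers and conjugation leave the translation direction unchanged; and $\langle g_i',h_i'\rangle=\langle g_i^{N},\tilde h_i^{N}\rangle$ is Schottky for every $i$.

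\emph{Step 2 (finding $\gamma$; the crux).} The genuine difficulty is that $\gamma$ must work in all $r$ factors at once, whereas nonelementarity is a per-factor hypothesis: $\Delta_i:=p_i(\Gamma)$ is nonelementary for each $i$. I would argue that the ``bad'' choices of $\gamma$ form a negligible set. For fixed $i$ and signs $\epsilon,\epsilon'$, the set $B_i^{\epsilon\epsilon'}:=\{\gamma\in\Gamma\mid p_i(\gamma)(h_i^{\epsilon})=g_i^{\epsilon'}\}$ is either empty or a left coset of $H_i^{\epsilon}:=p_i^{-1}(\mathrm{Stab}_{\Delta_i}(h_i^{\epsilon}))$. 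Since the stabilizer in $\Delta_i$ of a boundary point fixes a point of $\partial\bbH^2$, it is elementary, hence of infinite index in the nonelementary group $\Delta_i$; as $p_i$ is onto, $H_i^{\epsilon}$ has infinite index in $\Gamma$. Thus the whole bad set $\bigcup_{i,\epsilon,\epsilon'}B_i^{\epsilon\epsilon'}$ is a finite union of cosets of infinite-index subgroups, and by B.~H.~Neumann's covering lemma it cannot exhaust $\Gamma$. Any $\gamma$ outside it satisfies the condition required in Step 1.

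I expect Step 2 to be the main obstacle: passing from the per-factor nonelementarity to a single diagonal element of $\Gamma$ that simultaneously separates fixed points in every factor. Neumann's lemma is what supplies the simultaneity at no cost. A more hands-on alternative I would also consider is to pick a hyperbolic $f\in\Gamma$ whose fixed points avoid $\{g_i^{\pm}\}$ in each factor and take $\gamma=f^{n}$: since $f_i^{\,n}(h_i^{\pm})\to f_i^{+}$ as $n\to\infty$ whenever $h_i^{\pm}\ne f_i^{-}$, a single large $n$ pushes all fixed points of $h$ into prescribed small neighborhoods of the $f_i^{+}$, away from the $g_i^{\pm}$. This merely relocates the difficulty to producing such an $f$, which is again cleanest via the same counting argument, so I would make the Neumann argument the backbone of the proof.
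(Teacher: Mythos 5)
Your proof is correct. Note that the paper itself does not prove this lemma --- it is imported verbatim from \cite{sG10a} --- so the comparison is with the argument there, which secures the needed fixed-point separation by explicit dynamical choices in the spirit of your closing ``hands-on alternative'': per factor, one uses that a nonelementary subgroup of $\PSL(2,\R)$ contains infinitely many hyperbolic elements no two sharing a fixed point (Beardon, Theorem 5.1.3, as recalled in Section 2) to manufacture auxiliary elements, and then conjugates and raises to powers, with some case analysis over which factors exhibit coincidences. Your backbone is different: you convert the simultaneity crux --- one $\gamma\in\Gamma$ separating fixed points in all $r$ factors at once --- into the statement that at most $4r$ left cosets of the infinite-index subgroups $p_i^{-1}(\mathrm{Stab}_{p_i(\Gamma)}(h_i^{\pm}))$ cannot cover $\Gamma$, which is exactly B.~H.~Neumann's covering lemma. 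That buys a short, uniform argument with no case distinctions and no discreteness assumption (correctly so, since $\Gamma$ here need not be discrete and the stabilizer of a boundary point is elementary by the paper's definition, hence of infinite index in a nonelementary projection); the cost is only that two standard steps deserve a line each: (a) the single-factor fact must be used in its threshold form --- for hyperbolic $a,b$ with disjoint fixed-point sets, $\langle a^N,b^N\rangle$ is Schottky for \emph{all} sufficiently large $N$, which follows from uniform north--south dynamics on compact subsets of $\partial\bbH^2$ minus the repelling points, so that a single common exponent $N=\max_i N_i$ exists --- whereas the fact quoted in Section 2 only asserts ``some powers''; and (b) an elementary subgroup of finite index would force a finite orbit for the whole nonelementary group, justifying ``infinite index''. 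With those two remarks made explicit, your argument is complete and, if anything, more economical than the constructive route of \cite{sG10a}, though the latter yields concrete generators, which is convenient where the lemma is used quantitatively (as in the proof of Lemma 3.3, where Schottky components with prescribed translation directions are manipulated explicitly).
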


\begin{lemma}[\cite{sG10a}]
\label{L:Nonempty}
Let $\Gamma$ be a subgroup of $\PSL(2,\R)^r$ such that all mixed isometries in $\Gamma$ have only elliptic and hyperbolic components and $p_j(\Gamma)$ is nonelementary for at least one $j\in\{1,\ldots,r\}$. Then $\mathcal{L}_{\Gamma}^{reg}$ is not empty.
\end{lemma}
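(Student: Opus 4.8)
The plan is to reduce the statement to the production of a single \emph{fully hyperbolic} element $w \in \Gamma$, that is, one all of whose components $p_i(w)$ are hyperbolic. Indeed, by the computation in Section~2.3 preceding Theorem~\ref{T:LinkFP}, the attractive fixed point of such a $w$ is the endpoint of a regular geodesic and therefore lies in $\mathcal{L}_{\Gamma}^{reg}$; so exhibiting one element of this type immediately gives $\mathcal{L}_{\Gamma}^{reg} \neq \emptyset$. The whole task is thus to manufacture a fully hyperbolic element from the hypotheses, and the key structural input is that every mixed element of $\Gamma$ has only hyperbolic or elliptic components. This guarantees that each component $p_i(w)$ is conjugate to a translation or a rotation and never parabolic, so that $|\tr(p_i(w))|$ never degenerates to the value $2$ and the hyperbolicity criterion $|\tr(p_i(w))| > 2$ is the only thing to verify.

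First I would fix the factor $j$ with $p_j(\Gamma)$ nonelementary and choose $g, h \in \Gamma$ whose $j$-th components $p_j(g), p_j(h)$ are hyperbolic with no common fixed point in $\partial\bbH^2$; such elements exist because every nonelementary subgroup of $\PSL(2,\R)$ contains infinitely many hyperbolic isometries, no two sharing a fixed point (Theorem~5.1.3 in \cite{aB95}). After passing to powers as in Lemma~\ref{L:Schottky}, I may assume the $j$-th components generate a Schottky group, so in the factor $j$ the dynamics are under control. The elements $g$ and $h$ are then each either fully hyperbolic, in which case we are done, or mixed with every remaining component hyperbolic or elliptic.

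The core of the argument is an iteration over the factors $i = 1, \ldots, r$ that upgrades the components one at a time to hyperbolic ones while preserving those already made hyperbolic. Suppose an element $u \in \Gamma$ has $p_i(u)$ hyperbolic for all $i$ in some set $T$ and $p_{i_0}(u)$ elliptic for some $i_0 \notin T$. Using an auxiliary element $s \in \Gamma$ whose $i_0$-th component is hyperbolic, I would form a word of the shape $w = u^{a} s^{b}$ and analyze it componentwise. In factor $i_0$, a rotation followed by a sufficiently long hyperbolic translation has $|\tr| > 2$, so $p_{i_0}(w)$ is hyperbolic once $b$ is large. In each factor $i \in T$, North--South dynamics for the hyperbolic $p_i(u)$ shows that for $a$ large the translation length of $p_i(u^a)$ grows linearly and dominates the bounded perturbation from $s^{b}$, so $|\tr(p_i(w))| > 2$ persists. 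Choosing $a$ large first to entrench the factors of $T$, then $b$ large to fix factor $i_0$, the open condition $|\tr(p_i(w))| > 2$ holds simultaneously for all $i \in T \cup \{i_0\}$. Iterating this over all factors yields, after at most $r$ steps, a fully hyperbolic $w \in \Gamma$; its axis is a regular geodesic, $w^n(x) \to w^{+}$, and $w^{+} \in \mathcal{L}_{\Gamma}^{reg}$.

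The step I expect to be the main obstacle is the simultaneous trace control across all $r$ factors together with the availability of the auxiliary element $s$: when modifying $u$ to make one elliptic component hyperbolic, one must prevent the perturbation from pushing an already-hyperbolic component back across the threshold $|\tr| = 2$, and one must know that the target factor actually contains a hyperbolic element to drive the translation length to infinity. The former is the separation-of-scales estimate $a \gg b$ sketched above; the latter is where the setting enters decisively. In the irreducible arithmetic framework underlying the lemma the factors are Galois conjugate through the embeddings $\phi_i$, so a non-commuting pair of hyperbolic elements in the factor $j$ stays non-commuting in every factor $i$, which prevents any $p_i(\Gamma)$ from being a group of rotations about a single point and so supplies the needed hyperbolic auxiliaries in each factor. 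Carrying this bookkeeping through the $r$-fold induction, while keeping all component traces above $2$, is the technical heart of the proof.
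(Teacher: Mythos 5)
Your reduction is sound as far as it goes: by the discussion in Section~2.3, the attractive fixed point of an element of $\Gamma$ all of whose components are hyperbolic lies in $\mathcal{L}_{\Gamma}^{reg}$, so it suffices to produce one such element, and upgrading components one factor at a time is in the spirit of the paper's Lemma~\ref{L:OneElliptic2}. (Note the paper itself only cites this lemma from \cite{sG10a}; no proof appears here.) The first genuine gap is where you supply the auxiliary element $s$ with hyperbolic $i_0$-th component: you invoke ``the irreducible arithmetic framework underlying the lemma'' and Galois-conjugate embeddings $\phi_i$. The lemma has no arithmeticity hypothesis --- it is stated for an \emph{arbitrary} subgroup of $\PSL(2,\R)^r$ whose mixed elements have only elliptic and hyperbolic components, with only one projection assumed nonelementary; the remaining projections may a priori be elementary and nondiscrete, and there are no embeddings $\phi_i$ to conjugate through. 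The auxiliaries must instead be extracted from the mixed-isometry hypothesis itself, e.g.: a subgroup of $\PSL(2,\R)$ with no hyperbolic element is elementary; if $p_{i_0}(\Gamma)$ were purely elliptic it would have a global fixed point and be abelian, so every commutator in $\Gamma$ would have identity $i_0$-th component while having hyperbolic $j$-th component (commutators of a Schottky pair), yielding a mixed element with an identity component, which the hypothesis forbids; if $p_{i_0}(\Gamma)$ fixed a boundary point with only parabolic and elliptic elements, a preimage of a hyperbolic in factor $j$ would be mixed with a parabolic $i_0$-th component, again forbidden. Without an argument of this kind your induction has nothing to drive it in the nondistinguished factors, and with the Galois argument you have proved only a special case.

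The second gap is the quantitative step. You fix $a$ first ``to entrench the factors of $T$'' and then let $b \to \infty$, calling $s^b$ a bounded perturbation. If $p_i(s)$ is hyperbolic for some $i \in T$, then $s^b$ has translation length growing linearly in $b$, so it is not bounded, and a product of two hyperbolics with long translation lengths can still be elliptic in bad configurations. If $p_i(s)$ is elliptic of infinite order, boundedness does not save you: composing a hyperbolic of translation length $L$ with a rotation by angle $\theta$ centered on its axis gives trace $2\cosh(L/2)\left|\cos(\theta/2)\right|$, which is less than $2$ for $\theta$ near $\pi$ no matter how large $L$ is --- and since $s_i$ has infinite order, the rotation angles of $s_i^b$ return near $\pi$ for infinitely many $b$, so ``large translation length dominates'' is simply false in the stated quantifier order. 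The repair is to choose $b$ along simultaneous returns of all elliptic components of $s$ to a neighborhood of the identity (recurrence of $\{s^b\}$ in the closed subgroup of a torus that the elliptic coordinates generate), so that hyperbolicity in those factors survives by openness, while factors where $s_i$ is hyperbolic are handled by North--South dynamics together with the genericity of fixed points provided by Lemma~\ref{L:Schottky}; this is exactly the reflection-geodesic mechanism (the lines $L_{i,1},L_{i,4}$ disjoint versus intersecting) that the paper uses to prove Lemma~\ref{L:OneElliptic2}, and it must replace your separation-of-scales estimate.
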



We define the limit cone of $\Gamma$ to be the closure in $\RP^{r-1}$ of the set of the translation directions of the hyperbolic and mixed isometries in $\Gamma$. This definition is equivalent to the definition of the limit cone given by Benoist in \cite{yB97}. This is explained in Section 3.4 in \cite{sG10a}.

\begin{proposition}[\cite{sG10a}]
\label{P:ConvexAlg}
Let $\Gamma$ be a nonelementary subgroup of an irreducible arithmetic group in $\PSL(2,\R)^r$ with $r \geq 2$. Then $P_\Gamma$ is convex and the closure of $P_\Gamma$ in $\RP^{r-1}$ is equal to the limit cone of $\Gamma$ and in particular the limit cone of $\Gamma$ is convex.
\end{proposition}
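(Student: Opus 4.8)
The plan is to establish the three assertions in turn, after realizing $\Gamma$ (up to commensurability and conjugation) inside some $\Gamma(A,\mathcal{O})^*$. By the lemma on $\phi$-conjugates together with the definition of nonelementary, every mixed element of $\Gamma$ then has only hyperbolic and elliptic-of-infinite-order components, and by Lemma~\ref{L:Nonempty} and Theorem~\ref{T:LinkFP} the set $P_\Gamma$ is nonempty and equals the closure in $\RP^{r-1}_+$ of the translation directions $L(h)$ of the hyperbolic elements $h\in\Gamma$. (If $\Gamma$ has no mixed elements the second assertion is trivial, so assume one exists.)

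First, I would prove convexity of $P_\Gamma$. Given two hyperbolic $g,h\in\Gamma$, Lemma~\ref{L:Schottky} replaces them by hyperbolic $g',h'$ with $L(g')=L(g)$, $L(h')=L(h)$ and with $\langle g_i',h_i'\rangle$ a purely hyperbolic Schottky group in each coordinate $i$. In a two-generator Schottky group one has the quasi-additivity $|\ell(g_i'^m h_i'^n)-m\ell(g_i')-n\ell(h_i')|\le C_i$ for all $m,n\ge1$, since the axis of $g_i'^m h_i'^n$ fellow-travels a concatenation of arcs of the two axes. Hence each $g'^m h'^n$ is hyperbolic and its direction is $(m\ell(g_1')+n\ell(h_1')+O(1):\cdots:m\ell(g_r')+n\ell(h_r')+O(1))$; letting $m,n\to\infty$ with $m/n\to s$ sweeps out, as $s$ runs through $[0,\infty]$, the entire projective segment between $L(h)$ and $L(g)$ inside $\RP^{r-1}_+$. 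Thus the segment between any two directions of hyperbolic elements lies in $P_\Gamma$, and passing to limits (using that $P_\Gamma$ is closed in $\RP^{r-1}_+$) yields convexity of $P_\Gamma$.

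For the identification with the limit cone, the inclusion $\overline{P_\Gamma}\subseteq$ limit cone is immediate, the limit cone being closed in $\RP^{r-1}$ and containing every $L(h)$ with $h$ hyperbolic. For the reverse inclusion it suffices to show $L(m)\in\overline{P_\Gamma}$ for each mixed $m\in\Gamma$. Write $I$ for the coordinates where $m_i$ is hyperbolic and $J$ for those where $m_i$ is elliptic of infinite order, so $\ell(m_i)=0$ exactly for $i\in J$. Fix a genuinely hyperbolic $g\in\Gamma$ (it exists since $P_\Gamma\neq\emptyset$) with $g_i$ and $m_i$ sharing no fixed point for $i\in I$. For $i\in J$ the rotations $m_i^n$ all lie in the compact group fixing the fixed point of $m_i$, so $\{m_i^n g_i^l\}_n$ lies in a compact set and $\ell(m_i^n g_i^l)\le C_l$ for every $n$; for $i\in I$ one has $\ell(m_i^n g_i^l)=n\ell(m_i)+O_l(1)\to\infty$. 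Consequently, whenever $m^n g^l$ is hyperbolic in every coordinate, $L(m^n g^l)\in P_\Gamma$ and, dividing coordinate translation lengths by $n$, $L(m^n g^l)\to L(m)$, the bounded $J$-entries dying and $(\ell(m_i))_{i\in I}$ surviving on $I$.

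The crux, which I expect to be the main obstacle, is producing arbitrarily large $n$ for which $m^n g^l$ is hyperbolic in all $J$-coordinates simultaneously; this is exactly where the arithmetic hypothesis enters, via the fact that each $m_i$ with $i\in J$ is elliptic of \emph{infinite} order, i.e.\ an irrational rotation. In suitable coordinates $\tr(m_i^n g_i^l)=\bigl|\rho_{i,l}\cos(n\theta_i-\psi_{i,l})\bigr|$ with $\rho_{i,l}\ge\tr(g_i^l)\to\infty$ as $l\to\infty$, so $m_i^n g_i^l$ fails to be hyperbolic only when $n\theta_i\bmod 2\pi$ falls in a ``bad'' set $U_i^c$ whose measure tends to $0$ as $l\to\infty$. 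Since $\theta_i/2\pi$ is irrational, $(n\theta_i)_n$ equidistributes on the circle and the density of $\{n:n\theta_i\in U_i^c\}$ equals $\mathrm{Leb}(U_i^c)/2\pi$. Choosing $l$ so large that each bad set has measure below $\pi/|J|$, a union bound over $i\in J$ shows the set of good $n$ has lower density exceeding $\tfrac12$, hence is infinite. Along such a sequence $L(m^n g^l)\to L(m)$ with $L(m^n g^l)\in P_\Gamma$, so $L(m)\in\overline{P_\Gamma}$. Finally $\overline{P_\Gamma}$ is the closure of the convex set $P_\Gamma\subset\RP^{r-1}_+$, hence convex, and therefore the limit cone, being equal to $\overline{P_\Gamma}$, is convex as well.
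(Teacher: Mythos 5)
Your proposal is correct in substance, but note that the paper under review does not itself prove Proposition~\ref{P:ConvexAlg}: it is imported verbatim from \cite{sG10a}, and the present text only assembles the ingredients the original proof rests on (Lemma~\ref{L:Schottky}, Lemma~\ref{L:Nonempty}, Theorem~\ref{T:LinkFP}). Measured against that source, your convexity argument is the standard one: Lemma~\ref{L:Schottky} plus quasi-additivity of translation lengths in purely hyperbolic Schottky groups, sweeping out the segment between $L(g)$ and $L(h)$ by $L(g'^mh'^n)$ with $m/n\to s$, then closing up inside $\RP^{r-1}_+$. Where you genuinely diverge is the crux step for mixed isometries. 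The route taken in \cite{sG10a}, and echoed in this paper's proof of Lemma~\ref{L:OneElliptic2}, exploits density of the orbit of the tuple of rotations on a torus (via Lemma~3.3 of \cite{sG12}), whose joint version needs an arithmetic trace argument to control rational dependence among the rotation angles; you instead use only one-variable Weyl equidistribution for each elliptic coordinate separately and a union bound on the densities of the bad sets. That is a real simplification: it sidesteps all questions of joint independence of the angles. (One could go cheaper still: by pigeonhole recurrence in the compact torus, there are arbitrarily large $n$ with every $m_i^n$, $i\in J$, simultaneously close to the identity, making each $J$-component of $m^ng^l$ a small perturbation of the hyperbolic $g_i^l$; no equidistribution needed.) Relatedly, your aside that arithmeticity enters through the infinite order of the elliptic components is slightly misplaced: in this paper's framework that condition is built into the definition of nonelementary (and is automatic in the arithmetic ambient group by the lemma on $\phi$-conjugates), while what the arithmetic hypothesis really supplies to your proof is the discreteness of $\Gamma$ in $\PSL(2,\R)^r$ required by Theorem~\ref{T:LinkFP}.

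Two routine points should be made explicit. First, for $i\in I$ the asymptotic $\ell(m_i^ng_i^l)=n\ell(m_i)+O_l(1)$ requires that, in the eigenbasis of $m_i$, the leading matrix entry of $g_i^l$ be nonzero, i.e.\ that $g_i^l$ not send the attracting fixed point of $m_i$ to its repelling one; since $g_i$ and $m_i$ share no fixed point, $g_i^l$ applied to the attracting fixed point of $m_i$ converges to the attracting fixed point of $g_i$, which differs from the repelling fixed point of $m_i$, so this holds for all large $l$, compatibly with your choice of $l$ large in the measure bound. Second, the existence of a hyperbolic $g\in\Gamma$ with $g_i$ and $m_i$ sharing no fixed point for all $i\in I$ deserves a sentence (conjugation within the nonelementary projections, in the spirit of Lemma~\ref{L:Schottky}, produces one). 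Neither is a gap in the idea; both are patchable in a line or two.
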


\section{The limit set of Zariski dense subgroups}
\label{S:Modular}
The aim of this section is Theorem~\ref{T:FurstenbergLimitSets} which describes the Furstenberg limit set of Zariski dense subgroups $\Gamma$ of irreducible lattices in $\PSL(2,\R)^r$ with $r \geq 2$.

The next two theorems are results in this direction in the special cases when one projection of $\Gamma$ is an arithmetic Fuchsian group or a triangle Fuchsian group.

\begin{theorem}[\cite{sG10a}]
\label{T:MainCharactAlgR}
Let $\Delta$ be an irreducible arithmetic subgroup of $\PSL(2,R)^r$ with $r\geq2$ and $\Gamma$ a finitely generated nonelementary subgroup of $\Delta$. Then $\mathcal{L}_{\Gamma}^{reg}$ is not empty and $P_{\Gamma}$ is embedded homeomorphically in a circle if and only if $p_j(\Gamma)$ is contained in an arithmetic Fuchsian group for some $j\in\{1,\ldots,r\}$. 
\end{theorem}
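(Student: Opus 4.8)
The statement contains two assertions: the nonemptiness of $\Lim_\Gamma^{reg}$ and the equivalence. The first is immediate from Lemma~\ref{L:Nonempty}: since $\Gamma$ is nonelementary, every projection $p_i(\Gamma)$ is nonelementary and, by the lemma on the types of $\phi$-conjugates, every mixed element of $\Gamma$ has only hyperbolic and elliptic components, so the hypotheses of Lemma~\ref{L:Nonempty} hold. For the equivalence the plan is to translate the geometry of $P_\Gamma$ into arithmetic of traces. The basic observation is that for $g\in\Gamma$ with $g=(\phi_1(\varepsilon),\ldots,\phi_r(\varepsilon))$ the trace of the $i$-th component equals $\phi_i(t)$, where $t=\Tr_{\mathrm{rd}}(\varepsilon)\in K$ is the reduced trace, the \emph{same} element of $K$ for every $i$. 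Since $\ell(\phi_i(g))=2\arcosh(|\phi_i(t)|/2)$ when $\phi_i(g)$ is hyperbolic and $0$ otherwise, the translation direction $L(g)$, and hence every point of $P_\Gamma$, is determined by the tuple of Galois conjugates $(\phi_1(t),\ldots,\phi_r(t))$ of a single number $t\in K$.

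Next I would introduce the invariant trace field $k_\Gamma=\Q(\{\,t^2 : g\in\Gamma\,\})\subseteq K$ and partition the embeddings $\phi_1,\ldots,\phi_r$ according to their restriction to $k_\Gamma$. If $\phi_i$ and $\phi_{i'}$ agree on $k_\Gamma$, then $\phi_i(t)^2=\phi_{i'}(t)^2$ for every trace $t$, so $|\phi_i(t)|=|\phi_{i'}(t)|$ and the corresponding translation lengths coincide. Consequently $P_\Gamma$ is contained in the diagonal projective subspace in which coordinates belonging to the same restriction class are equal; if $d$ denotes the number of distinct restrictions, this subspace is a copy of $\RP^{d-1}$. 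By Proposition~\ref{P:ConvexAlg} the set $P_\Gamma$ is convex with closure equal to the limit cone, and by Benoist's theorem (\cite{yB97}) it spans $\RP^{d-1}$; hence within $\RP^{d-1}$ it is an arc or a point when $d\le 2$ and is at least two-dimensional when $d\ge 3$. Thus $P_\Gamma$ embeds homeomorphically in a circle precisely when $d\le 2$.

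For the direction $(\Leftarrow)$ I would invoke Takeuchi's arithmeticity criterion: $p_j(\Gamma)$ lies in an arithmetic Fuchsian group exactly when its invariant trace field $k_j$ is totally real, the traces are algebraic integers, and the associated quaternion algebra $B_j$ over $k_j$ is ramified at every real place except the identity one. Using the descent relation $B_j\otimes_{k_j}K\cong A$ and the fact that the factors of $\Delta$ correspond to the unramified places $\phi_1,\ldots,\phi_r$ of $A$, one sees that $A$ unramified at $\phi_i$ forces $B_j$ unramified at $\phi_i|_{k_j}$; since $B_j$ is ramified at all real places of $k_j$ but one, every restriction $\phi_i|_{k_j}$ must be that single unramified place, so $d=1$ and $P_\Gamma$ is a single point, which trivially embeds in a circle. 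For $(\Rightarrow)$, the containment of $P_\Gamma$ in a circle forces $d\le 2$ by the previous paragraph; reading the restriction data back through $B_j\otimes_{k_j}K\cong A$ shows that $B_j$ has at most one unramified real place, i.e.\ is ramified at all real places but one, and since the traces of $\Gamma\subseteq\Delta$ are algebraic integers in the totally real field $k_j$, Takeuchi's criterion places $p_j(\Gamma)$ inside an arithmetic Fuchsian group.

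The main obstacle is the number-theoretic bookkeeping in the last step: one must match exactly the count $d$ of distinct restrictions of the $\phi_i$ to $k_\Gamma$ with the number of unramified real places of the invariant quaternion algebra $B_j$ of the projection, and justify the descent $B_j\otimes_{k_j}K\cong A$ for the possibly non-Zariski-dense subgroup $\Gamma$, taking care of the borderline case $d=2$. A secondary point is to upgrade ``$P_\Gamma$ lies in a projective line'' to ``$P_\Gamma$ embeds homeomorphically in a circle'': here the convexity supplied by Proposition~\ref{P:ConvexAlg} guarantees that the one-dimensional set is an arc rather than a wilder subset, while the identification of the regular boundary with $(\partial\bbH^2)^r\times\RP^{r-1}_+$ together with Theorem~\ref{T:LinkFP} ensures that $P_\Gamma$ is genuinely the closure of the set of translation directions, so that the dimension count carried out above is meaningful.
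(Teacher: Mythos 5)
The paper you were given does not actually prove this statement: it is quoted from \cite{sG10a}, so the comparison is with the argument there, which indeed runs through the same arithmetic machinery you set up --- trace field, partition of the embeddings $\phi_1,\ldots,\phi_r$ by their restriction to it, the descent $B_j\otimes_{k_j}K\cong A$, the identification of the unramified real places of $B_j$ with the $d$ distinct restrictions $\phi_i|_{k_j}$, $i=1,\ldots,r$ (this is exactly the mechanism behind Corollary~\ref{C:GenExtZariski}, whose proof also supplies, via Lemma~\ref{L:Nonempty}, the fact you need that every $\phi_i(\Gamma)$, $i\le r$, contains hyperbolic elements), and a Takeuchi-type criterion. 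Your $(\Leftarrow)$ direction is sound, and your dimension count $\dim P_\Gamma=d-1$ is correct, although the spanning of $\RP^{d-1}$ needs the Zariski-closure computation of Corollary~\ref{C:GenExtZariski} before Benoist's theorem applies, not just convexity.

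The genuine gap is the borderline case $d=2$, which you flag but do not close --- and under your reading of the criterion it \emph{cannot} be closed, because the implication you assert there is false. Your own ramification count shows that $p_j(\Gamma)$ lies in an arithmetic Fuchsian group if and only if $B_j$ has exactly one unramified real place, i.e.\ $d=1$; but ``$P_\Gamma$ embeds homeomorphically in a circle'' holds, on the plain topological reading, whenever $d\le 2$, since a convex set of dimension $\le 1$ is a point or an arc. The two conditions genuinely differ: for $r=2$ \emph{every} subset of $\RP^1_+$ embeds in a circle, so taking $\Gamma=\Delta$ itself, or the Schmutz--Wolfart semi-arithmetic embeddings recalled in Section~4 (where $P_\Gamma$ is an arc), gives groups whose $P_\Gamma$ embeds in a circle while no projection is contained in an arithmetic Fuchsian group (the projections of $\Delta$ are not even discrete). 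So your step ``$d\le2$ forces $B_j$ to have at most one unramified real place'' is a non sequitur at $d=2$. The statement proved in \cite{sG10a} applies the circle criterion to the \emph{full limit set} $\Lim_\Gamma$, equivalently asserts that $P_\Gamma$ consists of exactly one point; with that reading your machinery does close the proof: if $\Lim_\Gamma$ embeds in a circle, then by Theorem~\ref{T:LinkFP} it contains $F_\Gamma\times P_\Gamma$, and since $F_\Gamma$ is uncountable ($\Gamma$ is nonelementary), $\dim P_\Gamma\ge1$ would yield uncountably many pairwise disjoint nondegenerate arcs in a separable circle, which is impossible; hence $P_\Gamma$ is a point, $d=1$ by the spanning argument, and Takeuchi's criterion (after the usual commensurability and $\Gamma^{(2)}$ reduction to guarantee integral traces) gives the containment, while conversely $d=1$ makes all components equal-length $\phi$-conjugates and $\Lim_\Gamma$ homeomorphic to $\Lim_{p_j(\Gamma)}\subset\partial\bbH^2$.
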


\begin{theorem}[\cite{sG12}]
Let $\Gamma$ be a Zariski dense subgroup of an irreducible arithmetic group in $\PSL(2,\R)^r$ with $r\geq 2$ such that $p_j(\Gamma)$ is a triangle Fuchsian group.  Then 
\begin{itemize}
\item[(i)] the Furstenberg limit set $F_\Gamma$ is the whole Furstenberg boundary $(\partial \bbH^2)^r$,
\item[(ii)] the limit set $\Lim_\Gamma$ contains an open subset of the geometric boundary of $(\bbH^2)^r$.
\end{itemize}
\end{theorem}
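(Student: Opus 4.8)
The plan is to put the two general structure theorems to work and then isolate the single genuinely arithmetic input, which is where the triangle-group hypothesis enters. Write $\Delta$ for the irreducible arithmetic group containing $\Gamma$; then $\Gamma$ is discrete (it sits inside the lattice $\Delta$), and since $p_j(\Gamma)$ is a triangle group it is in particular nonelementary, so Lemma~\ref{L:Nonempty} gives $\Lim_\Gamma^{reg}\neq\emptyset$ and Theorem~\ref{T:LinkFP} applies. It yields $\Lim_\Gamma^{reg}=F_\Gamma\times P_\Gamma$, with $F_\Gamma$ a minimal closed $\Gamma$-invariant subset of $(\partial\bbH^2)^r$ and $P_\Gamma$ the closure of the set of translation directions of the hyperbolic elements of $\Gamma$. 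The problem thus splits into understanding $F_\Gamma$ for part (i) and $P_\Gamma$ for part (ii).

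I would dispatch part (ii) as a formal consequence of part (i). Since $\Gamma$ is Zariski dense, Benoist's theorem (\cite{yB97}, Section 1.2) gives that $P_\Gamma$ has nonempty interior in $\RP^{r-1}_+$ (Proposition~\ref{P:ConvexAlg} already tells us $P_\Gamma$ is convex, so this is the natural strong form). Granting $F_\Gamma=(\partial\bbH^2)^r$ from part (i),
\[
\Lim_\Gamma^{reg}=(\partial\bbH^2)^r\times P_\Gamma\supseteq(\partial\bbH^2)^r\times\mathrm{int}(P_\Gamma),
\]
and under the identification $\partial((\bbH^2)^r)_{reg}\cong(\partial\bbH^2)^r\times\RP^{r-1}_+$ the right-hand side is open in the regular boundary. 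As the regular boundary is itself open in $\partial((\bbH^2)^r)$ (it is the locus where all $r$ direction-coordinates are positive), this produces the required open subset of the geometric boundary.

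The heart of the matter is part (i): showing the minimal set $F_\Gamma$ is the whole Furstenberg boundary. I would start from the one factor under control. The projection of $F_\Gamma$ to the $j$-th factor is a nonempty closed $p_j(\Gamma)$-invariant subset of $\partial\bbH^2$; since $p_j(\Gamma)$ is a triangle group, hence a lattice in $\PSL(2,\R)$, its action on $\partial\bbH^2$ is minimal and the projection is all of $\partial\bbH^2$. Recalling that $F_\Gamma$ is the closure of the set of attractive fixed points $\xi_F(h)=(p_1\!\circ\!\gamma(+\infty),\ldots,p_r\!\circ\!\gamma(+\infty))$ of hyperbolic $h\in\Gamma$, and that $\Gamma$ acts coordinatewise, the task becomes to prove that these fixed-point tuples are dense in the full product $(\partial\bbH^2)^r$, i.e. that the factors can be \emph{decoupled}. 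Here I would feed in the hyperbolic elements supplied by Lemma~\ref{L:Schottky} and run a ping-pong/north–south dynamics argument, using in an essential way that $\Gamma$ is Zariski dense in the \emph{full} $\PSL(2,\R)^r$ while its $j$-th projection is already a lattice.

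The main obstacle is exactly this decoupling, and it is where the triangle-group hypothesis does the real work. The scenario to rule out is that $F_\Gamma$ lies inside a proper, lower-dimensional closed invariant subset, which would happen if some other projection $p_i(\Gamma)$, $i\neq j$, were again discrete, tying the $i$-th coordinate to the $j$-th. I would exclude this through the arithmetic of triangle groups: a triangle group is semi-arithmetic, its invariant trace field coincides with the defining field $K$ of $\Delta$, and, being non-arithmetic, its Galois conjugates at the places $i\neq j$ are not discrete. Concretely, if both $p_j(\Gamma)$ and some $p_i(\Gamma)$ were discrete Fuchsian groups related by the field automorphism $\phi_i\phi_j^{-1}$, a trace-field comparison via Theorem~\ref{T:MainCharactAlgR} would force $p_j(\Gamma)$ to be arithmetic, contrary to hypothesis. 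Once all remaining projections are known to be non-discrete and jointly independent, minimality of $F_\Gamma$ should upgrade the coordinatewise density to $F_\Gamma=(\partial\bbH^2)^r$, finishing part (i). The delicate point throughout is converting Zariski density into honest topological density on the product of boundaries; this is the step I expect to demand the most care.
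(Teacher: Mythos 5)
The statement you were asked to prove is not actually proved in this paper: it is quoted from \cite{sG12}, so the comparison is with the argument there and with the closely related machinery of Section~3 here (Lemma~\ref{L:OneElliptic2} and Theorem~\ref{T:FurstenbergLimitSets}, whose proof generalizes exactly this theorem). Your outer skeleton is the right one: Lemma~\ref{L:Nonempty} plus Theorem~\ref{T:LinkFP} give $\Lim_\Gamma^{reg}=F_\Gamma\times P_\Gamma$, and your deduction of (ii) from (i) --- Benoist's nonempty-interior theorem for $P_\Gamma$, openness of $F_\Gamma\times\mathrm{int}(P_\Gamma)$ in the regular boundary, openness of the regular boundary in the geometric boundary --- is correct and is indeed how (ii) is obtained. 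The problems are concentrated in part (i), where the actual content lies, and there your proposal has two genuine gaps.

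First, the closing claim that ``minimality of $F_\Gamma$ should upgrade the coordinatewise density to $F_\Gamma=(\partial\bbH^2)^r$'' is false as a general principle, and no ping-pong argument with hyperbolic elements alone can supply the missing decoupling: if $G$ is a lattice in $\PSL(2,\R)$ and $\Gamma=\{(g,g):g\in G\}$, then $F_\Gamma$ is the diagonal of the $2$-torus --- minimal, with both coordinate projections onto $\partial\bbH^2$, yet a proper subset; north--south dynamics of hyperbolic elements preserves such graphs (this is precisely what Step~1 of the proof of Theorem~\ref{T:FurstenbergLimitSets} exploits via Tukia's boundary map). The decoupling device has to be \emph{mixed} elements whose $j$-th component is hyperbolic while the other components are elliptic of infinite order, i.e. nondiscreteness of every $p_i(\Gamma)$, $i\neq j$, combined with simultaneous approximation: attraction in the hyperbolic coordinate, density of irrational-rotation orbits in the elliptic ones, and, when several factors must be handled at once, the torus-density statement (Lemma~3.3 of \cite{sG12}, used in the proof of Lemma~\ref{L:OneElliptic2} here) fed into an induction as in Theorem~\ref{T:FurstenbergLimitSets}. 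Second, your exclusion of a second discrete projection via Theorem~\ref{T:MainCharactAlgR} does not parse: that theorem converts the shape of $P_\Gamma$ (embeddability in a circle) into containment of some projection in an arithmetic Fuchsian group, and the existence of two discrete projections is not an input it accepts; moreover arithmetic triangle groups exist, so non-arithmeticity of $p_j(\Gamma)$ is itself a consequence of the hypotheses to be extracted, not a given. The argument that works is rigidity: $\phi_i\circ\phi_j^{-1}$ is a type-preserving isomorphism, so a discrete $p_i(\Gamma)$ would be a triangle group of the same signature, hence conjugate in $\PSL(2,\R)$ to $p_j(\Gamma)$ (triangle groups admit no deformations), forcing equal traces and thus $\phi_i=\phi_j$ on the trace field of $p_j(\Gamma^{(2)})$; but Zariski density identifies that field with the defining field of $\Delta$, on which $\phi_i\neq\phi_j$ --- a contradiction. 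With these two repairs your outline becomes the proof of \cite{sG12}; as written, part (i) does not go through.
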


The next lemma is needed in the proof of Theorem~\ref{T:FurstenbergLimitSets}.

\begin{lemma}
\label{L:OneElliptic2}
Let $\Gamma$ be a Zariski dense subgroup of an irreducible arithmetic subgroup of $\PSL(2,\R)^r$ with $r\geq 2$. Further let $g=(g_1,\ldots,g_r)\in\Gamma$ be a mixed isometry with $g_1,\ldots, g_{k-1}$ hyperbolic and $g_k,\ldots, g_r$ elliptic of infinite order with $2 \leq k \leq r-1$. Then there exists a mixed isometry $\tilde{g}\in\Gamma$ such that $\tilde{g}_1,\ldots, \tilde{g}_k$ are hyperbolic and $\tilde{g}_{k+1}$ is elliptic of infinite order.
\end{lemma}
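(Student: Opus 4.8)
The plan is to produce $\tilde g$ in the form $\tilde g = g^n h$ for a single well-chosen $h\in\Gamma$ and a suitable exponent $n$. The geometric engine is that $g_k$ and $g_{k+1}$, being elliptic of infinite order, are rotations of infinite order about their fixed points $x_k,x_{k+1}\in\bbH^2$, so the powers $g_k^n$ (respectively $g_{k+1}^n$) equidistribute on the circle $\mathrm{Stab}(x_k)$ (respectively $\mathrm{Stab}(x_{k+1})$). Multiplying on the right by a fixed $h$ and following the trace will let me turn the $k$-th component hyperbolic while keeping the $(k+1)$-st elliptic; the components $1,\ldots,k-1$ will be handled by taking $|n|$ large.

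The first ingredient I would establish is that $p_{\{k,k+1\}}(\Gamma)$ is dense, in the real topology, in $\PSL(2,\R)^2$. Since $\Gamma$ is Zariski dense in $\PSL(2,\R)^r$, the projection $p_{\{k,k+1\}}(\Gamma)$ is Zariski dense in $\PSL(2,\R)^2$; let $H$ be its closure. Then $H$ is a closed subgroup containing the closure of $\{(g_k^n,g_{k+1}^n)\}$, and the latter is a torus of positive dimension whose Lie algebra contains a vector with both components non-zero (both $g_k,g_{k+1}$ being non-trivial rotations of infinite order). Hence $H$ is non-discrete, and since $H$ is normalized by the Zariski-dense group $p_{\{k,k+1\}}(\Gamma)$, its Lie algebra $\mathfrak h$ is a non-zero ideal of $\mathfrak{sl}_2\oplus\mathfrak{sl}_2$ that is contained in neither factor; the only such ideal is the whole algebra, so $H=\PSL(2,\R)^2$.

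Now I would choose $h\in\Gamma$, using this density, so that its $k$-th component displaces $x_k$ very far while its $(k+1)$-st component displaces $x_{k+1}$ very little. Writing a rotation about $x_k$ as $R$, the trace is of the form $\tr(Rh_k)=M_k\,|\cos(\theta-\theta_0)|$ with amplitude $M_k=2\cosh\!\big(\tfrac12 d(x_k,h_kx_k)\big)$; making $d(x_k,h_kx_k)$ large makes $M_k$ so large that $\tr(Rh_k)>2$ on an arc covering more than half of the circle. Symmetrically, $\tr(Rh_{k+1})=M_{k+1}\,|\cos(\theta-\theta_1)|$ with $M_{k+1}=2\cosh\!\big(\tfrac12 d(x_{k+1},h_{k+1}x_{k+1})\big)$ close to $2$, so $\tr(Rh_{k+1})<2$ on an arc covering more than half the circle. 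Because $g_k^n$ and $g_{k+1}^n$ each equidistribute on their circles, the set of $n$ for which $(g^nh)_k$ is \emph{not} hyperbolic has density $<\tfrac12$, and the set for which $(g^nh)_{k+1}$ is \emph{not} elliptic has density $<\tfrac12$; these two sets cannot cover $\Z$, so a (positive-density) set of $n$ makes $(g^nh)_k$ hyperbolic and $(g^nh)_{k+1}$ elliptic simultaneously. Taking such an $n$ with $|n|$ large also keeps $(g^nh)_i$ hyperbolic for $i<k$, the relevant trace being $\lambda_i^{n}a_i+\lambda_i^{-n}d_i\to\infty$, provided $h$ is chosen (as density allows) so that none of its first $k-1$ components interchanges the two fixed points of the corresponding component of $g$.

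For such an $n$ set $\tilde g=g^nh$: its first $k$ components are hyperbolic and its $(k+1)$-st is elliptic. Since $\tilde g$ lies in the arithmetic group and its first component is hyperbolic, the lemma of \cite{sG10a} on the types of the $\phi$-conjugates (its fourth case) forces every elliptic component, in particular $\tilde g_{k+1}$, to be elliptic of \emph{infinite} order, so $\tilde g$ is a mixed isometry of exactly the required shape. The step I expect to be the crux is precisely the simultaneous control of the $k$-th and $(k+1)$-st components by the single exponent $n$: the rotation angles of $g_k$ and $g_{k+1}$ may satisfy an arithmetic relation, so $(g_k^n,g_{k+1}^n)$ need not be dense in the full torus. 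The device circumventing this is to use the density of $p_{\{k,k+1\}}(\Gamma)$ to select $h$ so that each favorable set of rotations covers more than half of its circle; then a crude addition of densities, rather than any independence of the two angles, yields the desired $n$.
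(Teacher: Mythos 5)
Your treatment of the two critical coordinates $k,k+1$ is sound and genuinely different from the paper's. The density of $p_{\{k,k+1\}}(\Gamma)$ in $\PSL(2,\R)^2$ (torus closure of $\langle(g_k,g_{k+1})\rangle$ plus the ideal argument) is correct, the amplitude formula $\tr(Rh)=2\cosh\bigl(\tfrac12 d(x,hx)\bigr)\,|\cos(\theta-\theta_0)|$ checks out in the disc model, and the ``each favorable set has density greater than $\tfrac12$, so a union bound suffices'' device elegantly sidesteps exactly the difficulty the paper labors over: the paper must split into the cases $\tr(g_k)\neq\tr(g_{k+1})$ (where a Galois-conjugate counting argument plus density of $(g_k,g_{k+1})$-orbits on the $2$-torus is available) and $\tr(g_k)=\tr(g_{k+1})$ (equal rotation angles, handled by a reflection-geometry construction with two subcases), precisely because joint equidistribution of $(g_k^n,g_{k+1}^n)$ can fail. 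Your marginal-equidistribution argument needs no independence of the two angles at all, which is a real simplification at this point.

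However, there is a genuine gap at the coordinates $1,\ldots,k-1$, and it is not covered by your parenthetical ``as density allows.'' The density you proved concerns only the projection to the factors $k,k+1$; it imposes \emph{no} condition whatsoever on $h_1,\ldots,h_{k-1}$. This matters because $\Gamma$ is in general not dense in $\PSL(2,\R)^r$: the projections $p_1(\Gamma),\ldots,p_{k-1}(\Gamma)$ may be discrete, and this is exactly the situation the lemma is built for (cf.\ the case with discrete projections in the proof of Theorem~\ref{T:FurstenbergLimitSets}), so you cannot impose open conditions on those components. Concretely, in the eigenbasis of $g_i$ one has $\tr(g_i^nh_i)=\lambda_i^na_i+\lambda_i^{-n}d_i$, and $a_i=0$ means $h_i$ sends the attracting fixed point $g_i^+$ to the repelling one $g_i^-$; in that case the trace tends to $0$ as $n\to+\infty$ and the $i$-th component of $g^nh$ becomes \emph{elliptic}, destroying the required shape. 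Since $p_1$ is essentially injective on subgroups of irreducible arithmetic groups, it is a priori possible that every $h\in\Gamma$ whose $(k,k+1)$-components lie in your prescribed open set has first component in a single coset of the stabilizer of $g_1^+$, i.e.\ is ``bad''; nothing in your argument excludes such a conspiracy, and mixed zeros ($a_1=0$, $d_2=0$, say) can even block both signs of $n$ simultaneously. The paper secures these coordinates differently: it takes $h$ \emph{hyperbolic} and invokes Lemma~\ref{L:Schottky} so that $g_i$ and $h_i$ generate Schottky groups for $i<k$, whence the relevant products are hyperbolic for free. Your argument can be patched without abandoning its structure, e.g.\ by replacing $h$ with $hg^mh$ for suitable large $m$: if $h_i(g_i^+)=g_i^-$ then $(hg^mh)_i(g_i^+)=h_i(g_i^-)\neq g_i^-$ by injectivity, good coordinates stay good for large $m$, the displacement of $x_{k+1}$ stays small, and equidistribution of $m$ keeps the displacement of $x_k$ large; but as written the appeal to density at this step is unjustified and the proof is incomplete.
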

\begin{proof}

Since $\Gamma$ is nonelementary, by Lemma~\ref{L:Nonempty} and by the fact that the fixed points of hyperbolic isometries are dense in $\Lim_\Gamma$, there is $h\in \Gamma$ that is a hyperbolic isometry.
By Lemma~\ref{L:Schottky}, we can assume without loss of generality that for all $i=1,\ldots,k-1$, $h_i$ and $g_i$ generate a Schottky group.

The idea is to find $m\in\N$ such that $g_k^mh_k$ is hyperbolic and $g_{k+1}^mh_{k+1}$ is elliptic (of infinite order).

By \S 7.34 in the book of Beardon~\cite{aB95}, the isometries $h_i$, $i=k,k+1$, can be represented as $h_i=\sigma_{i,2}\sigma_{i,1}$ where $\sigma_{i,j}$ are the reflections in geodesics $L_{i,j}$ that are orthogonal to the axis of $h_i$ and the distance between them equals $\ell(h_i)/2$. By \S 7.33 in \cite{aB95}, the elliptic isometries $g_i^m$ can be represented as $g_i^m=\sigma_{i,4}\sigma_{i,3}$ where $\sigma_{i,j}$ are the reflections in $L_{i,j}$ that pass through the fixed point of $g_i^m$ and the angle between them equals the half of the angle of rotation of $g_i^m$. We can choose $L_{i,2}$ and $L_{i,3}$ to be the same geodesic by choosing them to be the unique geodesic passing through the fixed point of $g_i^m$ that is orthogonal to the axis of $h_i$. Then $\sigma_{i,2}=\sigma_{i,3}$ and
$$ g_i^mh_i = (\sigma_{i,4}\sigma_{i,3})(\sigma_{i,2}\sigma_{i,1})=\sigma_{i,4}\sigma_{i,1}.$$
Hence if $L_{i,1}$ and $L_{i,4}$ do not intersect and do not have a common point at infinity, the isometry $g_i^mh_i$ is hyperbolic, and if they intersect, $g_i^mh_i$ is elliptic. 

\textbf{Aim.} Show that we can choose $m$ so that $g_k^m$ is a rotation of such an angle that $L_{k,4}$ does not intersect $L_{k,1}$ and $g_{k+1}^m$ is a rotation such that $L_{k+1,4}$ intersects $L_{k+1,1}$.

\begin{figure}
\centering

\begin{tikzpicture}[scale=2.3]
\draw (0,0) circle (1) ;

\draw (0,0) node {$\bullet$};
\draw (0,0) node[above left]{Fix($g_k$)};

\draw \geoLine;
\draw (0,-1) node[below]{$L_{k,2}=L_{k,3}$};

\draw [rotate=-40] \geoLine;
\draw (50:0.96) node[above right]{$L_{k,4}$};

\draw [rotate=60] \geoBig;
\draw (-37:1) node[right]{Axis$(h_{k})$};

\draw [rotate=95] \geoSmall;
\draw (-55:0.97) node[below right]{$L_{k,1}$};
\end{tikzpicture}
\begin{tikzpicture}[scale=2.3]
\draw (0,0) circle (1) ;

\draw (0,0) node {$\bullet$};
\draw (0,0) node[above right]{Fix($g_{k+1}$)};

\draw \geoLine;
\draw (0,-1) node[below]{$L_{k+1,2}=L_{k+1,3}$};

\draw [rotate=60] \geoLine;
\draw (-33:1) node[right]{$L_{k+1,4}$};

\draw [rotate=45] \geoMedium;
\draw (-45:0.93) node[below right]{Axis$(h_{k+1})$};

\draw [rotate=82] \geoSmall;
\draw (-10:1) node[right]{$L_{k+1,1}$};
\end{tikzpicture}

\caption{The case with $\tr(g_k) \neq \tr(g_{k+1})$.}
\label{F:DifferentTraces}
\end{figure}
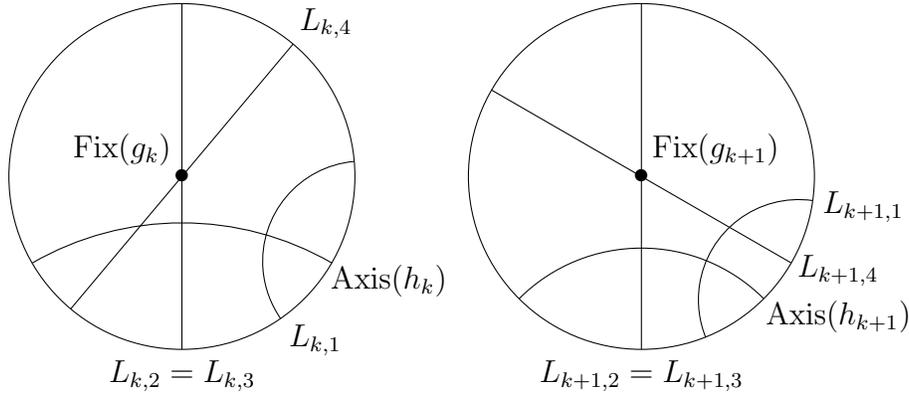

\textbf{Case $\tr(g_k) \neq \tr(g_{k+1})$.} First we note that for all $n \neq 0$ we have $\tr(g_k^n) \neq \tr(g_{k+1}^n)$. Indeed, suppose that it is not the case, i.e. there is a power $n$ such that $\tr(g_k^n) = \tr(g_{k+1}^n)$. The traces $\tr(g_1)$ and $\tr(g_k)$ have the same number $l$ of equal conjugates. Since $g_1$ is hyperbolic, $\tr(g_1^n) = \tr(g_i^n)$ if and only if $\tr(g_1)=\tr(g_i)$ for $i=2,\ldots,r$. Hence $\tr(g_1^n)$ has exactly $l$ equal conjugates. On the other hand $\tr(g_k^n)=\tr(g_i^n)$ if $\tr(g_1)=\tr(g_i)$ for $i=2,\ldots,r$ and additionally $\tr(g_k^n) = \tr(g_{k+1}^n)$. Therefore $\tr(g_k^n)$ has at least $l+1$ equal conjugates which is impossible.

Now we use the fact that every orbit of $(g_k,g_{k+1})$ on the 2-torus $(\partial\bbH^2)^2$ is dense. For a proof see for example Lemma 3.3 in \cite{sG12}. This gives us the possibility to choose $m$ so that $g_k^m$ is a rotation of a very small angle such that $L_{k,4}$ does not intersect $L_{k,1}$ and $g_{k+1}^m$ is a rotation such that $L_{k+1,4}$ intersects $L_{k+1,1}$, see Fig.~\ref{F:DifferentTraces} . 

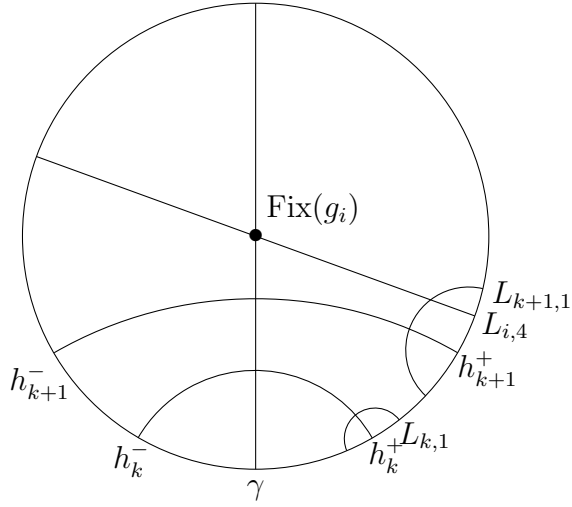
\begin{figure}
\centering
\begin{tikzpicture}[scale=3.1]
\draw (0,0) circle (1) ;

\draw (0,0) node {$\bullet$};
\draw (0,0) node[above right]{Fix($g_{i}$)};

\draw \geoLine;
\draw (0,-1) node[below]{$\gamma$};

\draw [rotate=70] \geoLine;
\draw (-23:1) node[right]{$L_{i,4}$};

\draw [rotate=60] \geoBig;
\draw (-35:1) node[right]{$h_{k+1}^+$};
\draw (-140:0.98) node[left]{$h_{k+1}^-$};

\draw [rotate=30] \geoSmall;
\draw (-62:0.93) node[below right]{$h_{k}^+$};
\draw (-116:0.93) node[below left]{$h_{k}^-$};

\draw [rotate=77] \geoTiny;
\draw (-15:1) node[right]{$L_{k+1,1}$};

\draw [rotate=38] \geoVeryTiny;
\draw (-53:0.94) node[below right]{$L_{k,1}$};
\end{tikzpicture}
\caption{The case with $\tr(g_k) = \tr(g_{k+1}) \text{ and } h_k^+ \neq h_{k+1}^+$.}
\label{F:EqualTracesA}
\end{figure}

\textbf{Case $\tr(g_k) = \tr(g_{k+1})$.} Since $\Gamma$ is Zariski dense, by a theorem of Benoist~\cite{yB97}, $P_{\Gamma}$ is of nonempty interior. Therefore we can choose the hyperbolic element from above so that $\tr(h_k) \neq \tr(h_{k+1})$.

Without loss of generality, after conjugation by an appropriate isometry of $(\bbH)^2$, we can assume the following picture: $g_k$ and $g_{k+1}$ are rotations in the same direction and they both fix the center in the unit disc model of the hyperbolic plane. Furthermore, we assume that the unique geodesic $\gamma$ passing through the center that is orthogonal to the axis of $h_k$ coincides with the corresponding unique geodesic passing through the center that is orthogonal to the axis of $h_{k+1}$. Hence $\gamma=L_{k,2}=L_{k,3} = L_{k+1,2}=L_{k+1,3}$.

First we notice that the attractive and repulsive fixed points of $h_j$, for $j \in \{k, k+1\}$, are symmetric with respect to $\gamma$. Hence  it is not possible that only the attractive or only the repulsive fixed points of $h_k$ and $h_{k+1}$ coincide, i.e. it is not possible to have $h_k^+ = h_{k+1}^+$ and $h_k^- \neq h_{k+1}^-$ or to have  $h_k^+ \neq h_{k+1}^+$ and $h_k^- = h_{k+1}^-$.

If $h_k$ and $h_{k+1}$ have different attractive fixed points, i.e. $h_k^+ \neq h_{k+1}^+$ (and hence $h_k^- \neq h_{k+1}^-$), then we can take $h^n$ instead of $h$ for some power $n$ so that $L_{k,1} \cap L_{k+1,1}=\emptyset$, $L_{k,1} \cap \text{Axis}(h_{k+1})=\emptyset$ and $L_{k+1,1} \cap \text{Axis}(h_{k})=\emptyset$. And since $g_k$ and $g_{k+1}$ are rotations of the same irrational angle, there exists $m$ such that $L_{k,4}$ does not intersect $L_{k,1}$ and $L_{k+1,4}$ intersects $L_{k+1,1}$ (see Fig.~\ref{F:EqualTracesA}).

If $h_k$ and $h_{k+1}$ have the same attractive fixed points, i.e. $h_k^+ = h_{k+1}^+$ (and hence $h_k^- = h_{k+1}^-$), then $L_{k,1}$ and $L_{k+1,1}$ do not intersect because $\tr(h_k) \neq \tr(h_{k+1})$. In this case, since $g_k$ and $g_{k+1}$ are rotations of the same irrational angle, there exists $m$ such that $L_{k,4}$ intersects both $L_{k,1}$ and $L_{k+1,1}$ under different angles (see Fig.~\ref{F:EqualTracesB}). So for the isometry $g'=g^mh$, for $i=1,\ldots,k-1$ the components $g'_i$ are hyperbolic as elements in Schottky groups and $g'_k$ and $g'_{k+1}$ are elliptic with $\tr(g'_k) \neq \tr(g'_{k+1})$. Hence instead of $g$ we can take $g'$ and we use the previous big case.

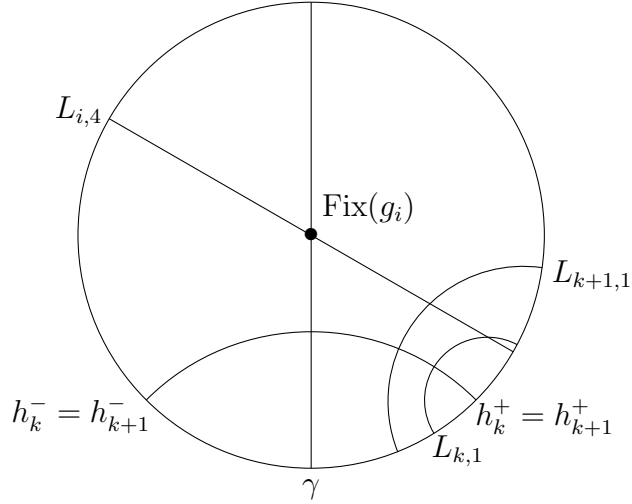
\begin{figure}
\centering
\begin{tikzpicture}[scale=3.1]
\draw (0,0) circle (1) ;

\draw (0,0) node {$\bullet$};
\draw (0,0) node[above right]{Fix($g_{i}$)};

\draw \geoLine;
\draw (0,-1) node[below]{$\gamma$};

\draw [rotate=60] \geoLine;
\draw (148:1) node[left]{$L_{i,4}$};

\draw [rotate=45] \geoMedium;
\draw (-45:0.93) node[below right]{$h_{k}^+ = h_{k+1}^+$};
\draw (-135:0.91) node[below left]{$h_{k}^- = h_{k+1}^-$};

\draw [rotate=82] \geoSmall;
\draw (-10:1) node[right]{$L_{k+1,1}$};

\draw [rotate=62] \geoTiny;
\draw (-60:0.94) node[below right]{$L_{k,1}$};
\end{tikzpicture}
\caption{The case with $\tr(g_k) = \tr(g_{k+1}) \text{ and } h_k^+ = h_{k+1}^+$.}
\label{F:EqualTracesB}
\end{figure}
\end{proof}

Our aim is to describe the possible Furstenberg limit sets of subgroups of irreducible lattices in $\PSL(2,\R)^r$ with $r \geq 2$. For a general Zariski dense group, we obtain the following result.

\begin{theorem}
\label{T:FurstenbergLimitSets}
Let $\Gamma$ be a finitely generated Zariski dense subgroup of an irreducible arithmetic subgroup of $\PSL(2,\R)^r$ with $r \geq 2$.

If there is $j \in \{1,\ldots,r\}$ such that $p_j(\Gamma)$ is discrete, then $F_\Gamma$ is homeomorphic to $\Lim_{p_j(\Gamma)}\times (\partial \bbH^2)^m$, where $m$ is the number of nondiscrete projections of $\Gamma$.

Otherwise, if $p_i(\Gamma)$ is not discrete for all $i=1,\ldots,r$, then $F_\Gamma$ is  the whole Furstenberg boundary.
\end{theorem}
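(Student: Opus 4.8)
The plan is to realize $F_\Gamma$ as the closure of a set of fixed points and then to analyze it one coordinate block at a time. First I note that $\Gamma$, being a subgroup of the lattice $\Delta$, is discrete in $\PSL(2,\R)^r$; since it is Zariski dense each $p_i(\Gamma)$ is nonelementary, and by the arithmetic structure (Lemma~5) every mixed isometry of $\Gamma$ has only hyperbolic and infinite-order elliptic components, so $\Gamma$ is nonelementary and $\Lim_\Gamma^{reg}\neq\emptyset$ by Lemma~\ref{L:Nonempty}. Theorem~\ref{T:LinkFP} then gives that $F_\Gamma$ is the minimal closed $\Gamma$-invariant subset of $(\partial\bbH^2)^r$, and since the attracting fixed points of hyperbolic elements are dense in $\Lim_\Gamma$, it suffices to determine the closure $\overline{\Phi}$ of $\Phi:=\{\,(h_1^+,\ldots,h_r^+)\mid h=(h_1,\ldots,h_r)\in\Gamma\text{ is hyperbolic}\,\}$. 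I split $\{1,\ldots,r\}$ into the set $N$ of indices with $p_i(\Gamma)$ nondiscrete (so $|N|=m$) and the set $D$ of indices with $p_i(\Gamma)$ discrete. For $i\in N$ the projection $p_i(\Gamma)$ is nonelementary and nondiscrete, so its closure is all of $\PSL(2,\R)$ and $\Lim_{p_i(\Gamma)}=\partial\bbH^2$; for $i\in D$ one only has $\Lim_{p_i(\Gamma)}\subset\partial\bbH^2$.

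Next I treat the discrete block. For $i,j\in D$ the fixed points of the two components $h_i,h_j$ of a hyperbolic $h\in\Gamma$ lie in a quadratic extension of the underlying totally real field and correspond under the Galois action relating the embeddings $\phi_i$ and $\phi_j$, so the assignment $h_i^+\mapsto h_j^+$ is governed by a single field-theoretic relation. Using that both $p_i(\Gamma)$ and $p_j(\Gamma)$ are discrete, so that this relation is single valued and order compatible on the respective limit sets, together with the density of fixed points, I would extend it to a $\Gamma$-equivariant homeomorphism $\Lim_{p_i(\Gamma)}\to\Lim_{p_j(\Gamma)}$. Consequently the restriction of $\overline{\Phi}$ to the coordinates in $D$ is the graph of these homeomorphisms and is homeomorphic to the single limit set $\Lim_{p_j(\Gamma)}$.

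The nondiscrete block is the heart of the argument. For each $i\in N$ the nondiscreteness of $p_i(\Gamma)$ supplies mixed isometries of $\Gamma$ whose $i$-th component is elliptic of infinite order. Starting from a hyperbolic element and applying Lemma~\ref{L:OneElliptic2} repeatedly, I would promote such isometries one coordinate at a time, producing hyperbolic elements of $\Gamma$ whose $N$-components are hyperbolic with fixed points controlled by the rotation numbers of the elliptic components that were turned hyperbolic. The density of the orbits of the resulting commuting irrational rotations on the torus $(\partial\bbH^2)^m$, for which the argument of Lemma~3.3 in \cite{sG12} applies once the relevant components have pairwise incommensurable traces, then shows that the projection of $\overline{\Phi}$ to the $N$-coordinates is all of $(\partial\bbH^2)^m$. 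The incommensurability of traces, and hence the independence of the $N$-coordinates, is exactly what Zariski density provides: by Benoist's theorem \cite{yB97} the projective limit set $P_\Gamma$ has nonempty interior, which forces $\Gamma$ to have the full trace field and rules out any residual coupling among the nondiscrete coordinates.

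Finally I assemble the two blocks: $\overline{\Phi}$ is the product of the graph over $D$ and the full torus over $N$, hence homeomorphic to $\Lim_{p_j(\Gamma)}\times(\partial\bbH^2)^m$, and by minimality of $F_\Gamma$ this product equals $F_\Gamma$, giving the first statement. When $D=\emptyset$ the discrete block is absent, $m=r$, and the same freedom argument yields $F_\Gamma=(\partial\bbH^2)^r$, which is the second statement. I expect the main obstacle to be precisely the nondiscrete block: simultaneously filling the torus $(\partial\bbH^2)^m$ densely while keeping every coordinate in $D$ pinned to its rigid graph, and ruling out coupling among the $N$-coordinates, is where the interplay between the promotion step of Lemma~\ref{L:OneElliptic2}, the torus-orbit density, and the Zariski-density input must be handled with care.
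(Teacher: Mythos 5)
Your overall skeleton matches the paper's: reduce to a closure of attractive fixed points via Theorem~\ref{T:LinkFP}, split the coordinates into a discrete block and a nondiscrete block, show the discrete block is a graph homeomorphic to one limit set, and fill the torus over the nondiscrete block using Lemma~\ref{L:OneElliptic2}. But two steps that you treat as routine are in fact the substance of the proof, and your proposed justifications for them do not work. First, in the discrete block you define the correspondence $h_i^+ \mapsto h_j^+$ through the Galois action and claim discreteness makes it ``single valued and order compatible,'' hence extendable to a $\Gamma$-equivariant homeomorphism $\Lim_{p_i(\Gamma)} \to \Lim_{p_j(\Gamma)}$. Galois conjugation is wildly discontinuous on $\R$, need not preserve cyclic order on the circle, and need not send the attractive fixed point of $h_i$ to the \emph{attractive} fixed point of $h_j$; nothing in your sketch produces continuity or monotonicity of the boundary map. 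The paper gets this homeomorphism from Tukia's theorem on type-preserving isomorphisms of geometrically finite M\"obius groups (Theorem~3.3 in \cite{pT85}), and this is exactly where the hypothesis that $\Gamma$ is \emph{finitely generated} enters (finitely generated Fuchsian groups are geometrically finite, so Tukia applies). Your proposal never uses finite generation at all, which is a warning sign: without it the statement of Step~1 is not available by these means.

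Second, in the nondiscrete block you prove (at best) that the \emph{projection} of $\overline{\Phi}$ to the $N$-coordinates is all of $(\partial\bbH^2)^m$, invoking ``commuting irrational rotations'' on the $m$-torus; but the rotations arising as elliptic components of different group elements do not commute in any useful sense, Lemma~3.3 of \cite{sG12} concerns the diagonal action of a \emph{single} pair of rotations on the $2$-torus, and surjectivity of a projection is strictly weaker than what the theorem asserts, namely that the fiber over \emph{every} point of the discrete-block graph is the full torus. You acknowledge this assembly problem yourself (``the main obstacle \ldots must be handled with care'') but do not resolve it, and your appeal to Benoist's nonempty-interior theorem for $P_\Gamma$ is a non sequitur here: it constrains translation directions, not Furstenberg coordinates. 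The paper closes this gap by an induction adding one nondiscrete coordinate at a time: Lemma~\ref{L:OneElliptic2} supplies $g \in \Gamma$ with $g_1,\ldots,g_{n-1}$ hyperbolic and $g_n$ elliptic of infinite order; along a subsequence of powers $g^{n_k}$ the elliptic coordinate sweeps out a dense subset of $\partial\bbH^2$ while the hyperbolic coordinates converge to their attractive fixed points, giving $\{\eta_1\}\times\cdots\times\{\eta_{n-1}\}\times\partial\bbH^2 \subset \overline{\Gamma(\xi)}$; then $\Gamma$-invariance together with the induction hypothesis (in the mixed case, together with density of $\Gamma_1(\eta_1)$ in $\Lim_{\Gamma_1}$, moving points by elements $h_n$ and taking limits of $((h_n)_1(\eta_1),\zeta_2)$) propagates this fiber over the whole base. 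Without this mechanism, or a substitute for it, your argument establishes neither the product structure nor, in the discrete case, the homeomorphism type, so as it stands the proof is incomplete at both of its critical points.
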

\begin{proof} For simplicity, we denote $\Gamma_i :=p_i(\Gamma)$, for $i=1,\ldots,r$.

First we remark that by Lemma~\ref{L:Nonempty} the regular limit set of $\Gamma$ is not empty and hence there is $\xi = (\xi_1,\ldots,\xi_r) \in F_{\Gamma}$. Without loss of generality, we can assume that $\xi$ is the projection in the Furstenberg boundary of the attractive fixed point of a hyperbolic isometry in $\Gamma$. By Theorem~\ref{T:LinkFP}, $F_{\Gamma}$ is the minimal closed $\Gamma$-invariant subset of the Furstenberg boundary $(\partial \bbH^2)^r$. Therefore $F_{\Gamma} = \overline{\Gamma(\xi)}$. 

We consider first the case where all projections are nondiscrete and then the case where there is $j \in \{1,\ldots,r\}$ such that $p_j(\Gamma)$ is discrete.
 
\subsubsection*{The case without discrete projections} 
The idea is to show by induction for $n=1,\ldots,r$ that $\overline{\Gamma(\xi)}$ is the whole Furstenberg boundary $(\partial \bbH^2)^r$ and hence $F_{\Gamma} = (\partial \bbH^2)^r$.

For $i=1,\ldots,r$, we note by $\Gamma_{1,i}$ the projection of $\Gamma$ to the first $i$ factors.  For simplicity of the notation, we write $F_{G}$ instead of $\Lim_G$ if $G$ is a subgroup of $\PSL(2, \R)$.

For $n=1$ it is clear that $F_{\Gamma_{1,1}} = \partial \bbH^2$ because a nondiscrete nonelementary subgroup of $\PSL(2,\R)$ contains elliptic elements of infinite order. An elliptic element $e$ of infinite order acts on $\partial\bbH^2$ as a "rotation" of irrational angle. That is why the orbit of a point in $\partial\bbH^2$ under the action of $e$ is dense in $\partial\bbH^2$.

Let us assume that $\overline{\Gamma_{1,n-1}((\xi_{1}\ldots,\xi_{n-1}))} = (\partial \bbH^2)^{n-1}$. 

By Lemma~\ref{L:OneElliptic2} we can find $g=(g_{1},\ldots,g_r)\in \Gamma$ such that $g_{1},\ldots,g_{n-1}$ are hyperbolic and $g_n$ is elliptic of infinite order. Let $\eta_i$ denote the attractive fixed point of $g_i$ for $i=1,\ldots,n-1$. We can assume that $\xi_i$ is none of the fixed points of $g_i$. The reason is that $\xi$ is the projection on the Furstenberg boundary of the attractive fixed point of an element $\tilde{g}$ in $\Gamma$. By Lemma~\ref{L:Schottky}, we can find $\tilde{g}'$ such that $\tilde{g}'_i$ and $g_i$ do not have any common fixed point for $i=1,\ldots,n-1$. Then instead of the attractive fixed point of $\tilde{g}$ we can take the attractive fixed point of $\tilde{g}'$. 

We note that $\{\eta_{1}\}\times\ldots\times\{\eta_{n-1}\}\times\partial\bbH^2$ is a subset of $\overline{\Gamma_{1,n}(\xi)}$. The reason is that for any point $\eta_n \in \bbH^2$ there is a sequence $\{n_k\}$ of powers of the elliptic element of infinite order $g_n$ such that $g_n^{n_k}(\xi_n)\longrightarrow \eta_n$ when $n_k\longrightarrow \infty$ and additionally, because of the dynamical properties of the hyperbolic isometries $g_i$ for all $i = 1, \ldots, n-1$, we have $g_i^{n_k}(\xi_i)\longrightarrow \eta_i$ when $n_k\longrightarrow \infty$. 

Hence by the induction hypothesis, for any point $(\zeta_{1},\ldots,\zeta_{n-1})$ in  $(\partial \bbH^2)^{n-1}$, the points in $\{\zeta_{1}\}\times\ldots\times\{\zeta_{n-1}\}\times\partial\bbH^2$ are in the closure of the orbits of $\{\eta_{1}\}\times\ldots\times\{\eta_{n-1}\}\times\partial\bbH^2$. Therefore we have $\overline{\Gamma_{1,n}((\xi_{1}\ldots,\xi_{n}))} = (\partial \bbH^2)^{n}$.

\subsubsection*{The case with discrete projections}
Now we consider the case where there is $j \in \{1,\ldots,r\}$ such that $p_j(\Gamma)$ is discrete. Without loss of generality we can assume that $\Gamma_i$ is discrete for $i=1,\ldots,q$ and nondiscrete for $i=q+1,\ldots,r$ where $q = r - m$. We denote by $\Gamma^{disc}$ the projection of $\Gamma$ to the first $q$ factors (the discrete factors) and by $\Gamma^{ndisc}$ the projection of $\Gamma$ to the rest of the factors.

The plan is to prove first  (Step 1) that  $F_{\Gamma^{disc}}$ is homeomorphic to $\Lim_{\Gamma_1}$. If there are no nondiscrete projections, this finishes the proof. Otherwise, we prove that $F_{\Gamma} = F_{\Gamma^{disc}}  \times F_{\Gamma^{ndisc}}$  (this is Step 2).

\textbf{Step 1.} For all $i=1,\ldots,q$, the natural isomorphism $\phi_i:=p_i \circ p_1^{-1}: \Gamma_1 \rightarrow \Gamma_i$ is type preserving. Since $\Gamma$ and hence $\Gamma_i$ are finitely generated, $\Gamma_i$ is geometrically finite. Hence, by Theorem~3.3 in the paper of Tukia~\cite{pT85}, there is a unique homeomorphism $f_i$ between $\Lim_{\Gamma_1}$ and $\Lim_{\Gamma_i}$ such that $f_i(g_1(x))=\phi_i(g_1)(f_i(x))$ for all $g_1 \in \Gamma_1$ and $x \in \bbH^2 \cup \partial \bbH^2$. We remark that $f_i$ maps the attractive fixed points of hyperbolic elements $g_1$ in $\Gamma_1$ to  the attractive fixed points of hyperbolic elements $g_i=\phi_i(g_1)$ in $\Gamma_1$.

We consider the map $f: \Lim_{\Gamma_1} \rightarrow (\partial \bbH^2)^q$, $\xi_1 \mapsto (\xi_1, f_2(\xi_1), \ldots, f_q(x_q))$. It is a homeomorphism on its image, i.e. $f: \Lim_{\Gamma_1} \rightarrow f( \Lim_{\Gamma_1})$ is a homeomorphism. By the remark above, $f$ is a bijection between the attractive fixed points of the hyperbolic isometries in $\Gamma_1$ and the projections to the Furstenberg boundary of the attractive fixed points of $\Gamma^{disc}$. Since the attractive fixed points of hyperbolic elements are dense in the corresponding limit sets, we have $F_{\Gamma^{disc}}=f(\Lim_{\Gamma_1})$. Therefore $f: \Lim_{\Gamma_1} \rightarrow  F_{\Gamma^{disc}}$ is a homeomorphism.

\textbf{Step 2.} The idea is to show by induction on the nondiscrete factors that $\overline{\Gamma(\xi)}$ and hence $F_{\Gamma}$ is  $F_{\Gamma^{disc}}  \times (\partial \bbH^2)^m$. The induction step is essentially the same as in the case without discrete projections. The difference is in the induction base (for which we consider only the the first nondiscrete factor).

By Step 1, there is a bijection between $F_{\Gamma^{disc}}$ and $\Lim_{\Gamma_1}$ compatible with the $\Gamma$-action. So, for convenience of the notation, we can consider only the case with one discrete factor.

Let $g = (g_1,g_2)\in \Gamma$ be a transformation such that $g_2$ is an elliptic transformation of infinite order. Such a $g_2$ exists because $\Gamma_2$ is not discrete. The isometry $g_1$ is hyperbolic because $\Gamma_1$ is discrete. 

Let $\eta_1$ be the attractive fixed point of $g_1$. First we note that $\{\eta_1\}\times\partial\bbH^2$ is in $\overline{\Gamma(\xi)}$. The reason is that for any point $\eta_2 \in \bbH^2$ there is a sequence $\{n_k\}$ of powers such that $g_2^{n_k}(\xi_2)\longrightarrow \eta_2$ when $n_k\longrightarrow \infty$ and additionally, because of the dynamical properties of the hyperbolic isometries, $g_1^{n_k}(\xi_1)\longrightarrow \eta_1$ when $n_k\longrightarrow \infty$. The only problem could be that $\xi_1$ is the repelling fixed point of $g_1$. In this case we consider $g^{-1}$ instead of $g$. 

Now let $\zeta = (\zeta_1,\zeta_2)$ be a point in $\Lim_{\Gamma_1}\times\partial\bbH^2$. Since $\Lim_{\Gamma_1}=\overline{\Gamma_1(\eta_1)}$, there is a sequence of elements $\{h_n\}$ in $\Gamma$ such that $(h_n)_1(\eta_1)\stackrel{n\rightarrow \infty}\longrightarrow \zeta_1$.

The points $(\eta_1,(h_n)_2^{-1}(\zeta_2))$ are points in $\overline{\Gamma(\xi)}$ because it contains $\{\eta_1\}\times\partial\bbH^2$. Hence all the points $((h_n)_1(\eta_1), \zeta_2)$ are in $\overline{\Gamma(\xi)}$. Therefore their limit $\zeta=\lim_{n\rightarrow \infty}((h_n)_1(\eta_1), \zeta_2)$ is also in $\overline{\Gamma(\xi)}$. Thus $\overline{\Gamma(\xi)}=\Lim_{\Gamma_1}\times\partial\bbH^2$.
\end{proof}

\begin{rem}
This theorem together with the fact that the projective limit set of a Zariski dense subgroup of $\PSL(2,\R)^r$ is of nonempty interior (this is a special case of the theorem by Benoist in Section 1.2 in \cite{yB97}) describes the regular limit set of finitely generated Zariski dense subgroups of $\PSL(2,\R)^r$.
\end{rem}
\begin{rem}
By Theorem~4.10 of Link \cite{gL06}, the attractive fixed points of the hyperbolic isometries in a nonelementary subgroup of $\PSL(2,\R)^r$ are dense in the limit set. Therefore the regular limit set of a nonelementary subgroup of $\PSL(2,\R)^r$ is dense in the limit set. Thus by the above theorem and the previous remark we have also a description of the whole limit set of Zariski dense subgroups of $\PSL(2,\R)^r$.
\end{rem}

\section{The limit set of finitely generated and nonelementary subgroups}
The aim of this section is to describe the limit set in the general case, i.e. the limit set of finitely generated and nonelementary subgroups of irreducible lattices in $\PSL(2,\R)^r$ with $r \geq 2$. We do it with Proposition~\ref{P:FurstenbergLimitSets} and Proposition~\ref{P:ProjectiveLimitSets}.

First we recall a result about the Zariski closure of subgroups of $\PSL(2,\R)^r$.

\begin{proposition}[\cite{sG12}]
\label{P:ZariskiClosure}
Let $\Gamma$ be a nonelementary subgroup of $\PSL(2,\R)^r$. Then the following holds for the Zariski closure of $\Gamma$.
$$ \overline{\Gamma}^Z=\prod_{i=1}^n \Diag_{k_i}(\PSL(2,\R)),$$
where $\Diag_{k_i}(\PSL(2,\R))$ is a conjugate of the diagonal embedding of $\PSL(2,\R)$ in $\PSL(2,\R)^{k_i}$ and where $k_1+\cdots+k_n=r$.
\end{proposition}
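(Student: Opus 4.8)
The plan is to identify $G := \overline{\Gamma}^Z$ directly as an algebraic subgroup of $\PSL(2,\R)^r$ by a Goursat-type analysis. Throughout write $S := \PSL(2,\R)$, which is simple with trivial centre. The first point is that $G$ surjects onto every factor. Since $\Gamma$ is nonelementary, each $p_i(\Gamma)$ is nonelementary; as every proper Zariski closed subgroup of $S$ fixes a point or a pair of points of $\partial\bbH^2$ or is finite — hence is elementary — we get $\overline{p_i(\Gamma)}^Z = S$. Because $p_i$ is a morphism of algebraic groups, $p_i(G) = \overline{p_i(\Gamma)}^Z = S$, and the same holds for the identity component $G^0$, since $S$ is connected and has no proper closed subgroup of finite index.

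The second ingredient is the two-factor dichotomy: an algebraic subgroup $H \le S \times S$ surjecting onto both factors is either all of $S \times S$ or the graph of an automorphism of $S$. Indeed $N := \{s \in S : (s,e) \in H\}$ is Zariski closed and, by surjectivity onto the first factor, normal in $S$; simplicity and triviality of the centre force $N \in \{e, S\}$, and likewise for the symmetric kernel. Trivial kernels make both projections injective on $H$, so $H$ is the graph of an isomorphism; a full kernel gives $H = S \times S$. Since every algebraic automorphism of $S$ is conjugation by an element of $\mathrm{PGL}(2,\R)$, such a graph is, up to conjugating one coordinate, the standard diagonal.

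Working with $G^0$, declare $i \sim j$ if the projection of $G^0$ to the $(i,j)$-coordinates is a graph rather than all of $S \times S$. This is an equivalence relation — transitivity holds because, if coordinate $j$ is an automorphic function of $i$ and $k$ of $j$, then $k$ is an automorphic function of $i$ — and its classes partition $\{1,\ldots,r\}$ into blocks of sizes $k_1,\ldots,k_n$. On each block every coordinate is an automorphic function of a fixed one, so the block-projection of $G^0$ is a coordinatewise conjugate of the standard diagonal $\Diag_{k_i}(S)$. Viewing $G^0$ inside the product of these block-diagonals $D_{B_1},\ldots,D_{B_n}$, distinct blocks are non-equivalent, which forces every pairwise projection $G^0 \to D_{B_s} \times D_{B_t}$ to be full. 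A gluing lemma — an algebraic subgroup of a product of simple groups that surjects onto each factor and has all pairwise projections full is the whole product — then yields $G^0 = \prod_{i=1}^n \Diag_{k_i}(S)$ up to conjugation. Finally $G = G^0$: the normalizer of a product of twisted diagonals in $S^r$ is that product itself, because the centralizer of $S$ in $S$ is trivial, and $G^0 \trianglelefteq G$.

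I expect the gluing lemma to be the main obstacle, since pairwise fullness does not formally entail global fullness once there are three or more factors; one must rule out genuine higher-order correlations among the coordinates. I would prove it by induction on the number of factors, combining the two-factor dichotomy with the fact that a surjection from a product of simple groups onto a simple group factors through a single coordinate: any such factoring would exhibit a pairwise graph relation, contradicting that distinct blocks are non-equivalent. Making this step airtight is the technical heart of the argument.
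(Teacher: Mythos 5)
Your proposal is correct, and since this paper states Proposition~\ref{P:ZariskiClosure} without proof (it is imported from \cite{sG12}), your Goursat-type analysis should be measured against the argument there, which rests on the same reduction: each $p_i(\Gamma)$ is nonelementary, hence Zariski dense in the simple group $\PSL(2,\R)$, after which one is analysing a subdirect product of simple groups and must show it splits into twisted diagonals. The step you flag as the technical heart --- the gluing lemma --- is in fact a classical fact for products of nonabelian \emph{abstractly} simple groups (it occurs in work of Hall and of Ribet), and your sketch closes exactly as you predict: in the induction, $N_1=\{s\in S_1:(s,e,\ldots,e)\in H\}$ is normal, hence trivial or all of $S_1$; if trivial, $H$ is the graph of a surjection $S_2\times\cdots\times S_n\to S_1$, whose kernel is a normal subgroup of a product of nonabelian simple groups and therefore a subproduct, so the surjection factors through a single coordinate $S_j$; this exhibits the $(1,j)$-projection as a graph, contradicting pairwise fullness. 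Since $\PSL(2,\R)$ is abstractly simple, this applies verbatim, and your worry about higher-order correlations (real for abelian factors, e.g.\ $\{(a,b,c)\in(\Z/2)^3: a+b+c=0\}$) does not arise here. Your normalizer argument for $G=G^0$ is also sound: conjugation in $\PSL(2,\R)^r$ acts coordinatewise and cannot permute factors, so the computation is blockwise, and triviality of the centre pins $g_i=\alpha_i(g_1)$.

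Three points to tighten. First, over $\R$ the image of a morphism of algebraic groups need not be Zariski closed (already the first projection of the closed subgroup $\{(y^2,y)\}\subset(\R^\times)^2$ is the positive reals), so you should claim only Zariski density of $p_i(G)$, not equality with $S$; this costs nothing, since in your two-factor dichotomy the kernel $N$ is still normal --- the normalizer of a Zariski closed subgroup is closed and contains the dense image of the projection --- and the graphs obtained extend by complexifying and descending. Second, your trichotomy for proper Zariski closed subgroups of $S$ omits those whose identity component is conjugate to $\mathrm{PSO}(2)$; these fix a point of $\bbH^2$ rather than of $\partial\bbH^2$ and are infinite, but they are still elementary, so the conclusion stands. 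Third, $\mathrm{Aut}(\PSL(2,\R))=\mathrm{PGL}(2,\R)$, so your twisted diagonals may involve the orientation-reversing outer automorphism and are then conjugates of the standard diagonal by elements of $\mathrm{PGL}(2,\R)^{k_i}$ rather than of $\PSL(2,\R)^{k_i}$; make sure this matches the convention for $\Diag_{k_i}$ intended in \cite{sG12}. Note finally that your argument uses only that every projection $p_i(\Gamma)$ is nonelementary, not the paper's stronger condition on mixed isometries, so you in fact prove slightly more than stated.
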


For a group $S$ we denote as usual by $S^{(2)}$ the subgroup of $S$ generated by the set $\{g^2 \mid g \in S\}$. If $S$ is a finitely generated nonelementary subgroup of $\PSL(2,\R)$ then $S^{(2)}$ is a finite index normal subgroup of $S$.

\begin{corollary} [\cite{sG12}]
Let $\Gamma$ be a finitely generated nonelementary subgroup of an irreducible arithmetic group $\Delta$ in $\PSL(2,\R)^r$. Then $\Gamma$ is Zariski dense in $\PSL(2,\R)^r$ if and only if the fields generated by $\Tr(p_i(\Gamma^{(2)}))$ and $\Tr(p_i(\Delta^{(2)}))$ are equal for one and hence for all $i \in \{1,\ldots,r\}$.
\end{corollary}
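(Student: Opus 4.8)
The plan is to reduce everything to a statement about the invariant trace field. Since $\Gamma \subseteq \Delta = \Gamma(A,\mathcal{O})^*$, every $g \in \Gamma$ has the form $(\phi_1(\varepsilon),\ldots,\phi_r(\varepsilon))$, so $\tr(p_i(g)) = |\phi_i(\mathrm{Trd}(\varepsilon))|$ with $\mathrm{Trd}(\varepsilon)\in K$. Writing $L := \Q(\Tr(p_1(\Gamma^{(2)})))$ and using $\phi_1 = \mathrm{id}$, I would first observe that $\Q(\Tr(p_i(\Gamma^{(2)}))) = \phi_i(L)$ and $\Q(\Tr(p_i(\Delta^{(2)}))) = \phi_i(K)$, the latter because the invariant trace field of a group derived from $A$ is its centre $K$ (Maclachlan--Reid \cite{cM03}). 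As each $\phi_i$ is a field isomorphism onto its image, $\phi_i(L) = \phi_i(K)$ holds for one $i$ iff $L = K$ iff it holds for all $i$; this is the ``one and hence all'' clause. Thus the corollary becomes the equivalence: $\Gamma$ is Zariski dense $\iff L = K$.

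Next I would translate Zariski density into a counting statement. By Proposition~\ref{P:ZariskiClosure}, $\overline{\Gamma}^Z = \prod_{i=1}^n \Diag_{k_i}(\PSL(2,\R))$, so $\Gamma$ is Zariski dense exactly when $n = r$, i.e. when all $k_i = 1$. A block $\Diag_{k_i}$ ties together precisely those projections $p_a(\Gamma)$ that are simultaneously conjugate by a single element of $\PSL(2,\R)$, so $n$ equals the number of $\PSL(2,\R)$-conjugacy classes among $p_1(\Gamma),\ldots,p_r(\Gamma)$. Since these projections are nonelementary, hence irreducible, two of them are conjugate iff they have the same character; passing to $\Gamma^{(2)}$ to remove the sign ambiguity of $\PSL$, this says $\phi_a(\mathrm{Trd}(\varepsilon^2)) = \phi_b(\mathrm{Trd}(\varepsilon^2))$ for all $\varepsilon$, i.e. $\phi_a|_L = \phi_b|_L$. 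Hence $n = \#\{\phi_1|_L,\ldots,\phi_r|_L\}$, the number of distinct restrictions to $L$.

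It remains to compute this number, and here the arithmetic enters. Let $B$ be the invariant quaternion algebra of $\Gamma$; it is defined over $L$ and satisfies $B \otimes_L K \cong A$ (Maclachlan--Reid \cite{cM03}). Since $K$ is totally real, completing $B \otimes_L K \cong A$ at a real place $v$ of $K$ lying over a real place $w$ of $L$ gives $A_v \cong B_w$ (both completions being $\R$); hence $A$ splits at $v$ iff $B$ splits at $w$. The split real places of $K$ are exactly $\phi_1,\ldots,\phi_r$, so these are precisely the real places of $K$ lying over the split real places of $L$; as $K/L$ is separable, each such place of $L$ has exactly $[K:L] = k$ places of $K$ above it, all splitting $A$. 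Therefore the restriction map $\{\phi_1,\ldots,\phi_r\}\to\{\text{split real places of }L\}$ is $k$-to-one, giving $n = r/k$. Consequently $\Gamma$ is Zariski dense $\iff n = r \iff k = 1 \iff L = K$, which by the first paragraph is the asserted field condition. (This also recovers the value $n = r/k$ used elsewhere, consistent with $\dim P_\Gamma = r/k - 1$.)

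The main obstacle is the arithmetic input of the last paragraph, namely establishing $B \otimes_L K \cong A$ and the resulting splitting correspondence. Without the consistency that the split places of $K$ come in full fibres over the split places of $L$, the restriction map could a priori be injective even when $L \subsetneq K$; it is precisely the quaternion-algebra structure that forbids this. By contrast, the character-variety step identifying conjugacy with equality of restricted embeddings, and the block count coming from Proposition~\ref{P:ZariskiClosure}, are comparatively routine.
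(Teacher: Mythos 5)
Your proof is correct, but it takes a genuinely different route from the paper's. The paper does not reprove this corollary (it is quoted from \cite{sG12}); the argument it does give is for the generalization, Corollary~\ref{C:GenExtZariski}, and there the decisive arithmetic step is dynamical rather than algebraic: after reducing to $\Delta = \Gamma(A,\mathcal{O})^*$, the paper uses Lemma~\ref{L:Nonempty} to guarantee hyperbolic elements of $\phi_l(S^{(2)})$ at \emph{every} unramified place $\phi_l$, $l\in\{1,\ldots,r\}$; since all traces of $S^{(2)}$ lie in $F=\Q(\Tr(S^{(2)}))$, unramifiedness of a place is detected by the restriction $\phi_l|_F$ alone, so the $r$ unramified places of $K$ fall into full fibres of size $k=[K:F]$ --- exactly the $k$-to-one statement you obtain instead from $B\otimes_L K\cong A$ and completion at real places. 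Your Maclachlan--Reid route via the invariant quaternion algebra and Skolem--Noether is self-contained and in fact proves the stronger counting $n=r/k$ of Corollary~\ref{C:GenExtZariski}, not only the dense case $k=1$; the paper's route is shorter given its toolkit, needing no algebra $B$ (its assertion that $\phi_{k_1}(S)=\phi_{k_2}(S)$ when restrictions agree is your character/Skolem--Noether step, with equality modulo the noncanonical embeddings into $M(2,\R)$). Both share the same skeleton: Proposition~\ref{P:ZariskiClosure} turns Zariski density into counting diagonal blocks, and blocks correspond to equal restrictions on the invariant trace field. Two routine points are worth making explicit in your write-up: the $\PSL$ sign ambiguity of traces (your passage to $\Gamma^{(2)}$, via $\tr(g^2)=\tr(\tilde g)^2-2$, handles it), and that equality of real characters of the absolutely irreducible projections gives conjugacy by an element of $\GL(2,\R)$, which is what the real diagonal blocks require.
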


We can generalize this corollary in the following way.
\begin{corollary}
\label{C:GenExtZariski}
Let $\Gamma$ be a finitely generated nonelementary subgroup of an irreducible arithmetic group $\Delta$ in $\PSL(2,\R)^r$. Then for the Zariski closure of $\Gamma$ we have
$$ \overline{\Gamma}^Z = \prod_{i=1}^n \Diag_{k}(\PSL(2,\R)),$$ 
where $k = [\Q(\Tr(p_j(\Delta^{(2)})) : \Q(\Tr(p_j(\Gamma^{(2)})))],$
for one and hence for all $j \in \{1,\ldots,r\}$.
\end{corollary}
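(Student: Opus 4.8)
The plan is to start from the block decomposition of Proposition~\ref{P:ZariskiClosure} and to identify the common block size with the field degree $k$. Write $F:=\Q(\Tr(p_1(\Gamma^{(2)})))$, and recall that for a group derived from a quaternion algebra over $K$ one has $\Q(\Tr(p_1(\Delta^{(2)})))=K$ (here $p_1=\phi_1$ is the identity), so that $k=[K:F]$. By Proposition~\ref{P:ZariskiClosure} we already know
\[ \overline{\Gamma}^Z=\prod_{i=1}^n \Diag_{k_i}(\PSL(2,\R)), \qquad k_1+\cdots+k_n=r, \]
and the entire content of the statement is that every $k_i$ equals $k$. The independence of $j$ will then follow exactly as in the preceding corollary (the case $k=1$): each $\phi_j$ is a field isomorphism of $K$ onto $\phi_j(K)$ carrying $F$ onto $\phi_j(F)$, so $[\phi_j(K):\phi_j(F)]=[K:F]=k$ for all $j$.

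First I would describe the blocks explicitly. A conjugate of the diagonal embedding $\Diag_{k_i}$ forces the projections attached to one block to be pairwise conjugate subgroups of $\PSL(2,\R)$, and conversely two projections lying in different blocks cannot be conjugate, since otherwise the Zariski closure would collapse to a coarser product of diagonals. Because each $p_i(\Gamma)$ is nonelementary, hence Zariski dense in $\PSL(2,\R)$, the subgroups $p_i(\Gamma)$ and $p_{i'}(\Gamma)$ are conjugate if and only if the induced characters on $\Gamma^{(2)}$ coincide, i.e. if and only if $\phi_i$ and $\phi_{i'}$ restrict to the same embedding of the invariant trace field $F$ into $\R$. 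Thus the factors $1,\ldots,r$ are partitioned into blocks according to the distinct restrictions $\phi_i|_F$, and the size $k_i$ of a block is the number of indices $i'\le r$ for which $\phi_{i'}$ extends the corresponding embedding of $F$.

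It then remains to count these extensions, and this is the step where the arithmetic genuinely enters and which I expect to be the main obstacle. Let $B$ be the invariant quaternion algebra of $\Gamma$ over $F$; the point to justify carefully is the base-change identity $A\cong B\otimes_F K$, which rests on the commensurability invariance of the invariant quaternion algebra together with the behaviour of quaternion algebras under extension of the base field. Granting this, fix a real place $v$ of $F$ and a place $w$ of $K$ above it; since $K$ is totally real every such $w$ is real, and from $A\otimes_K K_w\cong (B\otimes_F F_v)\otimes_{F_v}\R$ one reads off that $A$ is split at $w$ exactly when $B$ is split at $v$. Consequently the split places of $A$, which are precisely $\phi_1,\ldots,\phi_r$, are exactly the real places of $K$ lying over the split real places of $B$, and over each split place $v$ of $F$ there are exactly $[K:F]=k$ of them. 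Hence $\{1,\ldots,r\}$ splits into $n$ blocks, one over each split place of $B$, each of cardinality $k$, so that $r=nk$ and $k_i=k$ for every $i$, completing the proof. The two delicate points to nail down are the conjugacy criterion of the second paragraph and the base-change identity $A\cong B\otimes_F K$ of the third.
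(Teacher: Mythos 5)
Your proposal is correct, and its first half coincides with the paper's: the paper also reduces (via the previous corollary) to $\Delta=\Gamma(A,\mathcal{O})^*$, writes $\Gamma=S^*$, sets $K=\Q(\Tr(\Gamma(A,\mathcal{O})^{(2)}))$, $F=\Q(\Tr(S^{(2)}))$, and observes that two factors lie in the same diagonal block exactly when the corresponding embeddings $\phi_i$ agree on $F$. Where you genuinely diverge is the counting step. The paper stays dynamical: by Lemma~\ref{L:Nonempty}, $\phi_i(S^{(2)})$ contains hyperbolic elements if and only if $\phi_i(\Gamma(A,\mathcal{O})^{(2)})$ does, i.e.\ if and only if $i\leq r$; since the traces of $S^{(2)})$ lie in $F$, whether $\phi_i(S^{(2)})$ contains an element of trace of absolute value $>2$ depends only on $\phi_i|_F$, so each embedding $\sigma$ of $F$ occurring as a restriction of some split $\phi_i$ has all of its $[K:F]=k$ real extensions among $\phi_1,\ldots,\phi_r$. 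You instead obtain the same count from the invariant quaternion algebra $B$ over $F$, the base-change identity $A\cong B\otimes_F K$, and the local isomorphism $A\otimes_K K_w\cong B\otimes_F F_v$ at real places $w\mid v$. Your route is the algebraic mechanism underlying the paper's shortcut: it makes the ramification picture explicit and plugs into standard Maclachlan--Reid machinery, at the price of having to justify the base change, while the paper's argument uses only tools already established in the text.

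Two points in your write-up need tightening, both fixable. First, $A\cong B\otimes_F K$ does not follow from ``commensurability invariance'': $\Gamma$ may have infinite index in $\Delta$, so the two groups need not be commensurable. The correct argument is that $K[S^{(2)}]$ is a quaternion algebra over $K$ (as $S^{(2)}$ is nonelementary, hence irreducible) sitting inside the four-dimensional $K$-algebra $A$, so $K[S^{(2)}]=A$; the natural $K$-algebra surjection $B\otimes_F K\rightarrow K[S^{(2)}]$ is then injective because $B\otimes_F K$ is central simple. Second, your block criterion should be stated for the representations $p_i$, not for the abstract subgroups $p_i(\Gamma)$: conjugacy of subgroups neither implies nor is what you use. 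What you need is that equality of $\phi_i$ and $\phi_{i'}$ on $F$ forces the characters of $p_i$ and $p_{i'}$ on $\Gamma^{(2)}$ to coincide, hence the restricted representations are conjugate; then $p_{i,i'}(\Gamma)$ normalizes $\overline{p_{i,i'}(\Gamma^{(2)})}^Z$, a conjugated diagonal whose normalizer in $\PSL(2,\R)^2$ is itself, so the whole of $p_{i,i'}(\Gamma)$ lies in that diagonal. This normalizer step also disposes of the worry that the traces of $\Gamma$ itself may generate a field strictly larger than $F$, and it is the precise version of the paper's terse assertion that equal restrictions to $F$ give $\phi_{k_1}(S)=\phi_{k_2}(S)$.
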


\begin{proof}
By the proof of the previous corollary in \cite{sG12}, we can assume without loss of generality that $\Delta$ is an arithmetic group derived from a quaternion algebra $\Gamma(A,\mathcal{O})^*$. In this case $\Gamma$ is equal to $S^*$, where $S$ is a subgroup of $\Gamma(A,\mathcal{O})^*$.

We denote by $K$ the field generated by $\Tr(\Gamma(A,\mathcal{O})^{(2)})$ and by $F$ the field generated by $\Tr(S^{(2)})$.

First we remark that, by Lemma~\ref{L:Nonempty}, $\phi_i(\Gamma(A,\mathcal{O})^{(2)})$ contains hyperbolic elements if and only if $\phi_i(S^{(2)})$ contains hyperbolic elements, for all $i=1,\ldots,m$, where $m$ is the degree of $K$ over $\Q$. This means that for each Galois embedding $\sigma$ of $F$ into $\R$, that is the restriction of some $\phi_i$, $i=1,\ldots,r$, there are exactly $k=[K:F]$ embeddings $\phi_l$ with $l \in \{1, \ldots, r\}$, such that $\sigma$ is the restriction of $\phi_l$ to $F$.

The statement of the corollary follows from the above considerations and the fact that if the restrictions to $F$ of $\phi_{k_1}$ and $\phi_{k_2}$ are the same, then $\phi_{k_1}(S)=\phi_{k_2}(S)$.
\end{proof}

Notice that in this corollary $n$ is equal to $ [\Q(\Tr(p_j(\Delta^{(2)})) : \Q]$ if and only if $r$ is equal to $[\Q(\Tr(p_j(\Gamma^{(2)}))) : \Q]$.

Using the theorem in the previous section (Theorem~\ref{T:FurstenbergLimitSets}) and Corollary~\ref{C:GenExtZariski}, we prove the following proposition, which describes the Furstenberg limit set of a finitely generated and nonelementary subgroup of an irreducible arithmetic group in $\PSL(2,\R)^r$ with $r \geq 2$.

\begin{proposition}
\label{P:FurstenbergLimitSets}
Let $\Gamma$ be a finitely generated and nonelementary subgroup of an irreducible arithmetic subgroup $\Delta$ of $\PSL(2,\R)^r$ with $r \geq 2$. Further let $m$ be the number of nondiscrete projections of $\Gamma$ and $k$ the degree of extension $k = [\Q(\Tr(p_j(\Delta^{(2)})) : \Q(\Tr(p_j(\Gamma^{(2)})))]$
for one and hence for all $j \in \{1,\ldots,r\}$.

If there is $j \in \{1,\ldots,r\}$ such that $p_j(\Gamma)$ is discrete, then $F_\Gamma$ is homeomorphic to $\Lim_{p_j(\Gamma)}\times (\partial \bbH^2)^\frac{m}{k}$, otherwise,  $F_\Gamma$ is homeomorphic to $(\partial \bbH^2)^\frac{m}{k}$.
\end{proposition}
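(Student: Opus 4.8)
The plan is to reduce to the Zariski dense situation already settled in Theorem~\ref{T:FurstenbergLimitSets} by using the block structure of the Zariski closure furnished by Corollary~\ref{C:GenExtZariski}. By that corollary, $\overline{\Gamma}^Z=\prod_{i=1}^n \Diag_k(\PSL(2,\R))$ with $nk=r$, so the $r$ factors of $\PSL(2,\R)^r$ partition into $n$ blocks of $k$ factors, and on each block $\Gamma$ projects into a conjugate of the diagonal. Explicitly, for the $i$-th block there are fixed isometries $c^{(i)}_1,\ldots,c^{(i)}_k\in\PSL(2,\R)$ such that the projection to that block of any $g\in\Gamma$ has the form $\bigl(c^{(i)}_1 h (c^{(i)}_1)^{-1},\ldots,c^{(i)}_k h (c^{(i)}_k)^{-1}\bigr)$ for a common $h\in\PSL(2,\R)$. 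In particular, the $k$ projections of $\Gamma$ inside one block are pairwise conjugate, hence simultaneously discrete or nondiscrete.

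First I would choose one representative factor from each block and let $\Gamma'$ be the projection of $\Gamma$ to these $n$ representatives, arranging that a fixed discrete factor $j$ (if one exists) is among the representatives. Projecting $\overline{\Gamma}^Z$ onto the representatives collapses each diagonal block to a single $\PSL(2,\R)$, so $\Gamma'$ is Zariski dense in $\PSL(2,\R)^n$; it is finitely generated and nonelementary because $\Gamma$ is. Since within a block all $k$ projections are simultaneously discrete or nondiscrete, the number of nondiscrete projections of $\Gamma$ equals $m=k\,m'$, where $m'$ is the number of nondiscrete projections of $\Gamma'$; thus $m'=m/k$, and likewise $n=r/k$.

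Next I would show that $F_\Gamma$ is homeomorphic to $F_{\Gamma'}$. For a hyperbolic $g\in\Gamma$, the attractive fixed point of its $j$-th component in the $i$-th block is $c^{(i)}_j$ applied to the attractive fixed point of the representative component, since the attractive fixed point of $chc^{-1}$ is $c$ applied to that of $h$. Hence the Furstenberg projection of the attractive fixed point of $g$ lies in the image of the block-diagonal map
$$\Phi:(\zeta_1,\ldots,\zeta_n)\mapsto\bigl(c^{(1)}_1\zeta_1,\ldots,c^{(1)}_k\zeta_1,\ldots,c^{(n)}_1\zeta_n,\ldots,c^{(n)}_k\zeta_n\bigr),$$
which is a homeomorphism onto its image (one recovers $\zeta_i$ by applying $(c^{(i)}_1)^{-1}$ to the first coordinate of block $i$). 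The attractive fixed points of hyperbolic isometries are dense in $F_\Gamma$ by Theorem~\ref{T:LinkFP} and the density remark, and their representative coordinates are dense in $F_{\Gamma'}$; since $\Phi$ is continuous and the Furstenberg boundary is compact, passing to closures gives $F_\Gamma=\Phi(F_{\Gamma'})$, so $\Phi$ restricts to a homeomorphism $F_{\Gamma'}\to F_\Gamma$.

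Finally I would apply Theorem~\ref{T:FurstenbergLimitSets} to the Zariski dense group $\Gamma'$ in $\PSL(2,\R)^n$ (the degenerate case $n=1$, where $\Gamma'$ sits in a single $\PSL(2,\R)$ and $F_\Gamma$ is directly the limit set of $\Gamma'$, being handled separately and agreeing with the two formulas). If some $p_j(\Gamma)$ is discrete, then the representative $j$ gives a discrete projection of $\Gamma'$ with $p_j(\Gamma')=p_j(\Gamma)$, so $F_{\Gamma'}$ is homeomorphic to $\Lim_{p_j(\Gamma)}\times(\partial\bbH^2)^{m'}=\Lim_{p_j(\Gamma)}\times(\partial\bbH^2)^{m/k}$; otherwise all projections are nondiscrete, $m=r$, and $F_{\Gamma'}=(\partial\bbH^2)^n=(\partial\bbH^2)^{r/k}=(\partial\bbH^2)^{m/k}$. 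Combining with $F_\Gamma\cong F_{\Gamma'}$ yields the proposition. I expect the graph step of the third paragraph to be the main obstacle: one must verify that the block-diagonal constraint satisfied by the dense set of attractive fixed points really persists under closure and produces a genuine homeomorphism $F_\Gamma\cong F_{\Gamma'}$ rather than a mere continuous surjection, which is exactly where the fixed conjugating isometries $c^{(i)}_j$ and the minimality of $F_\Gamma$ from Theorem~\ref{T:LinkFP} enter.
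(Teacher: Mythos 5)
Your proposal is correct and takes exactly the route the paper intends: the paper gives no written proof of Proposition~\ref{P:FurstenbergLimitSets} beyond the remark that it follows from Theorem~\ref{T:FurstenbergLimitSets} and Corollary~\ref{C:GenExtZariski}, and your block-by-block reduction through the conjugated-diagonal structure of $\overline{\Gamma}^Z$, with the map $\Phi$ carrying $F_{\Gamma'}$ homeomorphically onto $F_\Gamma$ via density of attractive fixed points and minimality, is precisely that derivation carried out in more detail than the paper itself supplies. The one hypothesis you leave unchecked when invoking Theorem~\ref{T:FurstenbergLimitSets} is that $\Gamma'$ is again a subgroup of an irreducible arithmetic group in $\PSL(2,\R)^{r/k}$ (and in particular discrete); both facts hold --- discreteness because the block projection restricts to a topological group isomorphism of $\overline{\Gamma}^Z$ onto $\PSL(2,\R)^{r/k}$, and the arithmetic ambient group comes from the quaternion algebra over the smaller trace field $\Q(\Tr(p_j(\Gamma^{(2)})))$ exactly as in the proof of Corollary~\ref{C:GenExtZariski} --- so this is an omission of bookkeeping, not of substance.
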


The description of the projective limit set comes from the following proposition, which is an immediate corollary of Corollary~\ref{C:GenExtZariski} and the theorem by Benoist in Section 1.2 in \cite{yB97} that states in our particular case that the projective limit set of a Zariski dense subgroup of $\PSL(2,\R)^r$ is of nonempty interior.
\begin{proposition}
\label{P:ProjectiveLimitSets}
Let $\Gamma$ be a finitely generated nonelementary subgroup of an irreducible arithmetic subgroup $\Delta$ of $\PSL(2,\R)^r$ with $r \geq 2$. Then $P_{\Gamma}$ is of dimension $\frac{r}{k}-1$ with $k$ as above.
\end{proposition}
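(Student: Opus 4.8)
The plan is to combine Corollary~\ref{C:GenExtZariski} with the cited theorem of Benoist via a reduction to the genuinely Zariski dense case. First I would apply Corollary~\ref{C:GenExtZariski} to write
$$\overline{\Gamma}^Z = \prod_{i=1}^n \Diag_{k}(\PSL(2,\R)),\qquad nk=r,$$
so that $n = r/k$. The geometric content I want to extract is that the $r$ coordinates partition into $n$ blocks of size $k$, and that within each block the components of any element $h\in\Gamma\subseteq\overline{\Gamma}^Z$ are mutually conjugate by fixed elements of $\PSL(2,\R)$, since $\Diag_k(\PSL(2,\R))$ is by definition a conjugate of the diagonal embedding. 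Because conjugation is an isometry of $\bbH^2$ it preserves translation length, so all $k$ components inside a given block share a single translation length.

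For the upper bound, I would observe that this forces every translation direction $L(h)$ of a hyperbolic $h\in\Gamma$ to lie in the projective subspace $V\subset\RP^{r-1}$ cut out by the equalities of coordinates within each block; this $V$ is a copy of $\RP^{n-1}$. By Theorem~\ref{T:LinkFP}, $P_\Gamma$ is the closure in $\RP^{r-1}_+$ of these translation directions, hence $P_\Gamma\subseteq V\cap\RP^{r-1}_+$, and together with the convexity from Proposition~\ref{P:ConvexAlg} this gives $\dim P_\Gamma \leq n-1 = \frac{r}{k}-1$.

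For the matching lower bound I would introduce the projection $\pi:\PSL(2,\R)^r\to\PSL(2,\R)^n$ selecting one representative coordinate from each block. Restricted to $\overline{\Gamma}^Z$ this $\pi$ is an isomorphism onto $\PSL(2,\R)^n$, so $\Gamma':=\pi(\Gamma)$ is Zariski dense in $\PSL(2,\R)^n$ and $\pi$ restricts to a type-preserving isomorphism $\Gamma\to\Gamma'$. The block computation above shows that $\pi$ carries $L(h)$ to $L(\pi(h))$ under the natural homeomorphism $V\cap\RP^{r-1}_+\cong\RP^{n-1}_+$, and sends hyperbolic elements bijectively to hyperbolic elements; therefore $P_\Gamma$ is carried homeomorphically onto $P_{\Gamma'}$. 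Applying the theorem of Benoist to the Zariski dense group $\Gamma'$ in $\PSL(2,\R)^n$ yields that $P_{\Gamma'}$ has nonempty interior in $\RP^{n-1}_+$, hence dimension $n-1$. Combining the two bounds gives $\dim P_\Gamma = \frac{r}{k}-1$.

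The step I expect to be the main obstacle is this reduction: one must check carefully that $\pi(\Gamma)$ is Zariski dense in the \emph{full} factor $\PSL(2,\R)^n$ (so that Benoist's theorem genuinely applies), and that the identification of translation directions is exact, so that $\pi$ really matches $P_\Gamma$ with $P_{\Gamma'}$ rather than merely a subset or image. Everything else amounts to bookkeeping about the block structure coming from Corollary~\ref{C:GenExtZariski}.
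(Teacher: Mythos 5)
Your proposal is correct and is essentially the paper's own route: the paper declares Proposition~\ref{P:ProjectiveLimitSets} an immediate corollary of Corollary~\ref{C:GenExtZariski} and Benoist's theorem, which is exactly your combination of the block-structure bound (equal translation lengths within each $\Diag_k$-block, forcing $P_\Gamma$ into a copy of $\RP^{\frac{r}{k}-1}$) with the reduction through $\pi$ to the Zariski dense group $\Gamma'=\pi(\Gamma)\subseteq\PSL(2,\R)^{r/k}$. The one step you leave implicit --- identifying $P_{\Gamma'}$ with the closure of the translation directions of hyperbolic elements via Theorem~\ref{T:LinkFP}, which requires $\Gamma'$ to be discrete --- is harmless, since $\pi$ restricted to the (Zariski, hence real) closed subgroup $\overline{\Gamma}^Z$ is an isomorphism of topological groups, so $\Gamma'$ inherits discreteness from $\Gamma$; alternatively you can run the lower bound through the limit cone and Proposition~\ref{P:ConvexAlg}, since Benoist's theorem on limit cones requires no discreteness at all.
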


The above two propositions give us a relation between the dimensions of $F_\Gamma$ and $P_\Gamma$:
\begin{corollary}
\label{C:IneqDimFP}
Let $\Gamma$ be a finitely generated nonelementary subgroup of an irreducible arithmetic subgroup $\Delta$ of $\PSL(2,\R)^r$ with $r \geq 2$. Then 
$$\dim{F_{\Gamma}} \leq 1+\dim{P_\Gamma} .$$
\end{corollary}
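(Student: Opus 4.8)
The plan is to turn the statement into a pure dimension count fed by the two preceding propositions. Set $n := r/k$; by Corollary~\ref{C:GenExtZariski} this is exactly the number $n$ of diagonal blocks in the Zariski closure of $\Gamma$, so $n$ is a positive integer, and Proposition~\ref{P:ProjectiveLimitSets} gives $\dim P_\Gamma = r/k - 1 = n-1$. It therefore suffices to prove $\dim F_\Gamma \leq n$ in each of the two cases of Proposition~\ref{P:FurstenbergLimitSets}, since then $\dim F_\Gamma \leq n = 1 + (n-1) = 1 + \dim P_\Gamma$.

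First I would dispose of the case where $p_i(\Gamma)$ is nondiscrete for all $i$. Here $m = r$, so Proposition~\ref{P:FurstenbergLimitSets} gives $F_\Gamma \cong (\partial\bbH^2)^{m/k} = (\partial\bbH^2)^{n}$, which has dimension exactly $n = 1 + \dim P_\Gamma$. So in this case the inequality in fact holds with equality, and there is nothing further to do.

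Next I would treat the case where some $p_j(\Gamma)$ is discrete, where $F_\Gamma \cong \Lim_{p_j(\Gamma)} \times (\partial\bbH^2)^{m/k}$. The group $p_j(\Gamma)$ is a finitely generated nonelementary Fuchsian group, so its limit set $\Lim_{p_j(\Gamma)}$ sits inside $\partial\bbH^2 \cong S^1$ and hence has dimension at most $1$. Since the other factor is the $(m/k)$-dimensional manifold $(\partial\bbH^2)^{m/k}$, dimension is additive across the product with this manifold factor, and I obtain $\dim F_\Gamma = \dim \Lim_{p_j(\Gamma)} + m/k \leq 1 + m/k$. It then remains only to check that $m/k \leq n-1$.

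The main obstacle, and the only place where the arithmetic structure genuinely enters beyond the cited results, is this last bound $m/k \leq n-1$: knowing merely that one projection is discrete gives $m \leq r-1$, which is not enough. The point is that discreteness is constant on each block of size $k$. I would argue this from Corollary~\ref{C:GenExtZariski}: the $r$ projections fall into $n$ diagonal blocks of size $k$, and within a block they are the groups $\phi_l(S)$ for embeddings $\phi_l$ sharing a common restriction to the trace field of $\Gamma^{(2)}$, which coincide as observed in the proof of that corollary. Hence the projections in a block are simultaneously discrete or nondiscrete, so $m$ is exactly $k$ times the number of nondiscrete blocks, and the existence of a discrete projection forces at least one entire block to be discrete. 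This leaves at most $n-1$ nondiscrete blocks, i.e. $m/k \leq n-1$, and therefore $\dim F_\Gamma \leq 1 + (n-1) = n = 1 + \dim P_\Gamma$, completing the argument.
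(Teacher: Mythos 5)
Your proof is correct and follows the route the paper intends: the corollary is presented there as an immediate consequence of Propositions~\ref{P:FurstenbergLimitSets} and~\ref{P:ProjectiveLimitSets}, which is exactly your dimension count ($\dim P_\Gamma = r/k-1$, $\dim F_\Gamma \leq 1 + m/k$, with equality $\dim F_\Gamma = r/k$ in the all-nondiscrete case). The only step the paper leaves implicit --- that discreteness of projections is constant on the blocks of size $k$, so the existence of one discrete projection forces $m/k \leq r/k - 1$ --- you supply correctly from the observation $\phi_{k_1}(S)=\phi_{k_2}(S)$ in the proof of Corollary~\ref{C:GenExtZariski}.
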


In particular, this corollary implies that it is not possible to have simultaneously a big $F_{\Gamma}$ and a small $P_\Gamma$. For example, it is shown in  \cite{sG10a} that if $P_\Gamma$ is just one point, then there is a projection of $\Gamma$ that is contained in an arithmetic Fuchsian group and hence $F_{\Gamma}$ is the smallest possible, i.e. $F_{\Gamma}$ is embedded homeomorphically in a circle.

Some examples of small $F_{\Gamma}$ and big $P_\Gamma$ can be obtained by the nonarithmetic examples of semi-arithmetic groups constructed by Schmutz and Wolfart in Theorem~1 and Theorem~2 in \cite{pS00}: A semi-arithmetic group is a Fuchsian group that can be embedded naturally in an irreducible lattice in $\PSL(2,\R)^r$. Let $S$ be a semi-arithmetic group as above. From Theorem~1 and Theorem~2 in \cite{pS00} we can deduce that $S$ can be embedded in an irreducible lattice in $\PSL(2,\R)^2$.  We note $\Gamma$ this natural embedding. On one hand, by \cite{pS00}, all projections of $\Gamma$ are discrete and hence $F_{\Gamma}$ is homeomorphic to a circle. On the other hand, $\Gamma$ is Zariski dense and therefore $P_{\Gamma}$ is of nonempty interior. In these examples we have $\dim{F_{\Gamma}}=1$ and $\dim{P_\Gamma}=1$ and so the inequality of Corollary~\ref{C:IneqDimFP} is strict.

Finally we note that it is possible to have infinite covolume groups $\Gamma$ with big limit sets, i.e. big $F_{\Gamma}$ and big $P_\Gamma$. In \cite{sG12} it is shown that the minimal embeddings of groups admitting modular embeddings (e.g. triangle Fuchsian groupes as shown by Cohen and Wolfart in \cite{pC90}) are examples of infinite covolume subgroupes with big $F_{\Gamma}$ and big $P_\Gamma$, i.e. $F_{\Gamma}$ is the whole Furstenberg boundary and $P_{\Gamma}$ is of nonempty interior. In this case we have $\dim{F_{\Gamma}} = 1+\dim{P_\Gamma}$.

\end{document}